\documentclass[12pt,a4paper,reqno]{amsart}
\usepackage[x11names]{xcolor}
\usepackage[linkcolor=Blue2,citecolor=red,colorlinks]{hyperref}
\usepackage[capitalize,nameinlink]{cleveref}
\usepackage[linkcolor=Blue2,citecolor=red,colorlinks]{hyperref}
\usepackage{color, bm, amscd, tikz-cd, amssymb}
\usepackage{color, bm, amscd, tikz-cd}

\usepackage[utf8]{inputenc}

\usepackage{array,amsbsy,amscd,amsfonts,color,amssymb,amstext,amsmath,latexsym,amsthm,amscd
,multicol,graphicx,tikz}
\usepackage[english]{babel}
\usepackage{hyperref}
\usepackage[mathscr]{eucal}
\usepackage{cleveref}

\newtheorem{theorem}{Theorem}
\numberwithin{theorem}{section}
\newtheorem{proposition}[theorem]{Proposition}
\newtheorem{lemma}[theorem]{Lemma}
\newtheorem{corollary}[theorem]{Corollary}

\theoremstyle{definition}
\theoremstyle{definition}\newtheorem{definition}[theorem]{Definition}
\theoremstyle{definition}\newtheorem{remark}[theorem]{Remark}
\theoremstyle{definition}\newtheorem{example}[theorem]{Example}
\theoremstyle{definition}
\theoremstyle{definition}
\theoremstyle{definition}
\theoremstyle{definition}

\newcommand{\id}{\operatorname{id}} 

\newcommand{\B}{\mathcal{B}}
\newcommand{\D}{\mathbb{D}}

\newcommand{\K}{\mathcal{K}}

\renewcommand{\H}{\mathcal{H}}

\newcommand{\F}{\mathcal{F}}

\renewcommand{\L}{\mathcal{L}}
\newcommand{\N}{\mathbb{N}}

\newcommand{\C}{\mathbb{C}}

\renewcommand{\S}{\mathcal{S}}

\newcommand{\Z}{\mathbb{Z}}

\newcommand{\E}{\mathcal{E}}

\newcommand{\bp}{\begin{proof}}
\newcommand{\ep}{\end{proof}}
\newcommand{\bdp}{\begin{dproof}}
\newcommand{\edp}{\end{dproof}}

\newcommand{\supp}{\operatorname{supp }}

\newcommand{\diag}{\operatorname{diag}}

\renewcommand{\ker}{\operatorname{ker }}

\newcommand{\sotlim}{\operatorname{SOT}-\!\!\lim}
\newcommand{\T}{\mathfrak T}

\begin{document}

\title{Isometric dilations and Beurling quotient modules}

\author[B.K. Das]{B. Krishna Das}
\address{Department of Mathematics, Indian Institute of Technology Bombay, Powai, Mumbai, 400076, India}
\email{bata436@gmail.com, dasb@math.iitb.ac.in}

\author[S. Rastogi]{Shubham Rastogi}
\address{Department of Mathematics, Indian Institute of Technology Bombay, Powai, Mumbai, 400076, India}
\email{shubhamrastogi4ever@gmail.com, shubham\_pd@isibang.ac.in}

\subjclass[2020]{47A13, 47A15, 47A20, 47A45, 47A65, 47B91}
\keywords{Dilation, Brehmer's positivity, doubly commuting, defect operator, Beurling quotient modules}

	

\begin{abstract}
In \cite{Dan-Guo}, the authors provided a characterization of all sequences of doubly commuting pure contractions that have the sequence $\textbf{M}_z,$ the sequence of the multiplication by the coordinate functions on the vector-valued Hardy space $H^2_\E(\D_2^{(\infty)})$ over the Hilbert's multidisk $\D_2^{(\infty)},$ as their minimal isometric dilation for some Hilbert space $\E$. In this article, we remove the hypothesis of doubly commutativity and 
characterize all sequences of commuting contractions that are unitarily equivalent to $(P_{\mathcal Q}M_{z_i}|_{\mathcal Q})_{i\geq 1}$ for some quotient module $\mathcal Q$ of $H^2_\E(\D_2^{(\infty)})$.
We further characterize those sequences for which $\mathcal Q$ is a Beurling quotient module. This generalizes Theorem 3.2 from \cite{BKD} to a sequence of commuting contractions.
\end{abstract}

 \maketitle   

\section{Introduction}
The foundation of dilation theory is the result by Sz.-Nagy, which states that every contraction on a Hilbert space has a unique minimal isometric dilation. And\^o extended Sz.-Nagy's result to pairs of commuting contractions, showing that every such pair has an isometric dilation. A different proof of And\^o's dilation result surfaced recently with applications to sharp von Neumann inequality and distinguished varieties. For more details, see \cite{Das-Sarkar, DSS, BD, AM, Sau}. And\^o's dilation result does not extend to $n$-tuple of commuting contractions, in general, for $n\geq 3;$ see \cite{Parrott}. However, under specific additional positivity conditions, it is possible for such dilations to exist; see, for example, \cite{Brehmer, Nagy-Foias, Curto, Gasper-Suciu, BDHS, Popovici, Barik-Das, Popovici2}. One such condition is the positivity condition introduced by Brehmer in \cite{Brehmer}, known as Brehmer's positivity.  

In fact, it is now well-known through the work of Brehmer (\cite{Brehmer}), Halperin (\cite{Halperin}) and Sz.-Nagy (\cite{Nagy}) that an $n$-tuple $\textbf{T}$ satisfies Brehmer's positivity if and only if it admits a unique (up to unitary equivalence) minimal $*$-regular unitary dilation \cite[Chapter I, Theorem 9.1]{Nagy-Foias}. Restriction of a $*$-regular unitary dilation to a suitable subspace yields the existence of a minimal $*$-regular isometric dilation.  Moreover, it is also known that $\textbf T$ has a minimal $*$-regular isometric dilation if and only if it has a minimal doubly commuting isometric dilation (see \cite[Theorem 1]{Gasper-Suciu}; see also \cite[Theorem 2]{Timotin} and \cite{Skalski}). Since any $n$-tuple of doubly commuting pure isometries can be modeled by the tuple of multiplication by the coordinate functions  $\textbf{M}_z=(M_{z_1},M_{z_2},...,M_{z_n})$ on the vector-valued Hardy space over the polydisk $H^2_\E(\D^n),$ for some Hilbert space $\E,$ the equivalences above lead to one of the most significant results in multivariable dilation theory: an $n$-tuple of commuting pure contractions satisfies Brehmer's positivity if and only if it has $\textbf{M}_z$ on $H^2_\E(\D^n)$ as its minimal isometric dilation (see \cite{Curto, Muller}). Equivalently, this result provides a characterization of all $n$-tuples of commuting contractions that admit $\textbf{M}_z$ on $H^2_\E(\D^n)$ as their minimal isometric dilation, for some Hilbert space $\E.$

Brehmer's positivity condition regained interest in the eighties starting with the work of Alger (\cite{Agler}) in the setting of hypercontractions and subsequently generalized further in the setting of hypercontractions corresponding to multiweight sequences (see \cite{Curto, Curto2, Muller, O, BDDP}). Let $\textbf{T}=(T_n)_{n\in \mathbb{N}}$ be a sequence of commuting contractions on a Hilbert space $\H.$ The sequence $\textbf{T}$ is said to satisfy Brehmer's positivity if for each finite subset $F$ of $\mathbb N$, $\Delta_F(\mathbf T)\ge 0$,
where $$\Delta_F(\mathbf  T):=\sum_{G\subset F}(-1)^{|G|}T_G{T_G}^*$$ with $T_G:=T_{n_1}T_{n_2}\cdots T_{n_k}$ for any $G=\{n_1,n_2,...,n_k\}\subset F$ and by convention $T_{\emptyset}=I_\H.$ We note that, in the above definition of Brehmer's positivity, we adopt the convention of placing the adjoint on the right rather than on the left. This convention may cause some inconvenience for readers who are accustomed to the opposite convention. Immediate examples of Brehmer sequences of commuting contractions include sequences of doubly commuting contractions $\textbf{T}=(T_n)_{n\in\mathbb N}$, that is, $\mathbf T$ is a commuting sequence and for all $i\neq j$
\[
T_i^*T_j=T_jT_i^*.
\]
The starting point of this article is to study Brehmer's positivity of a sequence of commuting contractions in terms of their isometric dilation.

 Let $\Z_+^{(\infty)}$ denote the set of infinite tuples of nonnegative integers which are finitely supported. For a sequence of commuting contractions $\textbf{T}=(T_n)_{n\geq 1}$ on a Hilbert space $\H$ and $\alpha\in \Z_+^{(\infty)}$ with $\alpha=(\alpha_1,\alpha_2,...,\alpha_n,0,0,...)$, we denote by $\textbf{T}^\alpha$ the bounded operator $T_1^{\alpha_1}T_2^{\alpha_2}\cdots T_n^{\alpha_n}.$ With this notation, the notion of isometric (or unitary) dilation and $*$-regular isometric (or unitary) dilation for a sequence of commuting contractions are defined as follows.   
\begin{definition}\label{defn*-regular}
Let $\textbf{T}=(T_n)_{n\geq 1}$ be a sequence of commuting contractions on $\H.$ Let $\textbf{V}=(V_n)_{n\geq 1}$ be a sequence of commuting isometries (or unitaries) on a Hilbert space $\K$ which contains $\H$ as a closed subspace. Then $\textbf{V}$ is said to be an isometric (or unitary) dilation of $\textbf{T}$ if $$\textbf{T}^\alpha=P_{\H}\textbf{V}^\alpha|_\H$$ for all $\alpha \in \Z_+^{(\infty)},$ where $P_\H$ denotes the orthogonal projection of $\K$ onto $\H.$ 

If, in addition, $\textbf{V}$ satisfies $$\textbf{T}^\beta {\textbf{T}^*}^\alpha=P_\H {\textbf{V}^*}^\alpha\textbf{V}^\beta|_\H$$ for all $\alpha,\beta\in \Z_+^{(\infty)}$ with $\alpha \wedge \beta=(0,0,...),$ then $\textbf{V}$ is said to be a $*$-regular isometric (or unitary) dilation of $\textbf{T}$, where $\alpha \wedge \beta:=(min\{\alpha_1,\beta_1\},min\{\alpha_2,\beta_2\},...).$

An isometric dilation $\textbf{V}$ is said to be minimal if $$\overline{span}\{\textbf{V}^\alpha (h): \alpha \in \Z_+^{(\infty)}, h\in \H\}=\K.$$
\end{definition}

Minimal $*$-regular isometric dilation of a sequence of doubly commuting contractions is well-established (see \cite{Solel, Shalit}). In fact, it is also known that a sequence $\textbf T$ of commuting contractions has a $*$-regular unitary dilation if and only if $\textbf{T}$ satisfies Brehmer's positivity (\cite[Chapter I, Theorem 9.1]{Nagy-Foias}). In Section \ref{Section:1}, we extend this result by showing that a sequence $\textbf{T}$ satisfies Brehmer's positivity if and only if $\textbf T$ has a unique minimal $*$-regular isometric dilation if and only if $\textbf T$ has a unique minimal doubly commuting isometric dilation. Furthermore, each $T_i$ is a pure contraction, that is, \[T_i^{*n}\xrightarrow{n\to\infty} 0\ \text{in the strong operator topology for all}\ i\in\mathbb N,\] if and only if its minimal doubly commuting isometric dilation comprises  pure isometries. 


Let $\D_2^{(\infty)}$ be the Hilbert’s multidisk 
defined as $$\D_2^{(\infty)}:=\{z=(z_n)_{n\geq 1}\in l^2: |z_n|<1 \ for\ all\ n\in \N \}.$$ Let $\E$ be a Hilbert space. The analogue of the Hardy space in infinitely many variables is as follows.  
\begin{definition}\label{defnHardy}
The Hardy space $H^2_\E(\D_2^{(\infty)})$ over the Hilbert's multidisk  $\D_2^{(\infty)}$ is defined as 

$$H^2_\E(\D_2^{(\infty)}):=\Big\{\sum_{\alpha\in \Z_+^{(\infty)}}a_\alpha z^\alpha: a_\alpha\in \E, \sum_{\alpha\in \Z_+^{(\infty)}}\|a_\alpha\|^2<\infty\Big\}.$$    
\end{definition}
 Let $\textbf{M}_z=(M_{z_1},M_{z_2},...)$ denote the sequence of multiplication operators by the coordinate functions on the vector-valued Hardy space $H^2_\E(\D_2^{(\infty)})$ over the Hilbert's multidisk $\D_2^{(\infty)}$. While the sequence $\textbf{M}_z$ on $H^2_{\E}(\D_2^{\infty})$ consists of doubly commuting pure isometries, it is important to emphasize that, surprisingly, not every sequence $\textbf V$ of doubly commuting pure isometries can be modeled by $\textbf {M}_z$. This is one instance illustrating a genuine distinction between the behavior of infinitely many isometries and that of finitely many isometries.  The subtlety lies in the nature of the defect space of $\textbf V$: it may not be a generating wandering subspace for $\textbf V$ and can even be trivial in many cases. 
For further details, see the discussion preceding Lemma ~\ref{decomposition-DCPI} (see also \cite{Dan-Guo}). Therefore, to obtain a functional model for a sequence of commuting contractions, it is natural to ask the following question:

\textbf{Question:} {\em What are all the sequences $\textbf{T}$ of commuting contractions that have the sequence $\textbf{M}_z$ on $H^2_\E(\D_2^{(\infty)})$ as their minimal isometric dilation for some Hilbert space $\E$?}

A partial answer to the above question is obtained in \cite[Corollary 1.4]{Dan-Guo} under the additional hypothesis of doubly commutativity. Here, we obtain a complete answer to this question in Theorem \ref{CharacterizationBeurlingtype}. To broaden the applicability of this characterization, we also obtain an easily verifiable sufficient condition in Proposition~\ref{thm:summability}.
It is evident that a sequence of commuting contractions $\textbf T$ has the sequence $\textbf{M}_z$ on $H^2_\E(\D_2^{(\infty)})$ as its minimal isometric dilation for some Hilbert space $\E$ if and only if there exists a Hilbert space $\mathcal{F}$ such that $$\textbf T \cong P_{\mathcal Q}\textbf{M}_{z}|_{\mathcal Q}=(P_{\mathcal Q}M_{z_1}|_{\mathcal Q},P_{\mathcal Q}M_{z_2}|_{\mathcal Q}, \dots) $$ for some quotient module $\mathcal Q$ of $H^2_\F(\D_2^{(\infty)})$
(that is, a closed subspace $\mathcal Q$ of $H^2_\F(\D_2^{(\infty)})$ which is invariant under $M_{z_i}^*$ for all $i$). In such a case, it is expected that the quotient module $\mathcal Q$ should reflect the intrinsic properties of $\textbf T$ and vice versa. We investigate this relationship as described below.     

Let $\Omega$ denote either $\D_2^{(\infty)}$ or $\D^n$ for some $n\in \N.$ Let $H^{\infty}_{\mathcal B(\E_*,\E)}(\Omega)$ denote the set of all bounded holomorphic functions from  $\Omega$ to  $\B(\E_*,\E)$ (the set of all bounded linear operators from Hilbert spaces $\E_*$ to $\E$). A function $\Phi \in H^{\infty}_{\mathcal B(\E_*,\E)}(\Omega)$ is called inner if the multiplication operator $M_\Phi: H^2_{\E_*}(\Omega)\rightarrow H^2_\E(\Omega),$ given by $M_\Phi(f)=\Phi f,$ is an isometry. A quotient module $\mathcal Q$ of $H^2_\E(\Omega)$ is said to be a Beurling quotient module if there exist a Hilbert space $\E_*$ and an inner function $\Phi \in H^{\infty}_{\mathcal B(\E_*,\E)}(\Omega)$ such that $\mathcal Q^\perp=\Phi H^2_{\E_*}(\Omega).$ Note that when $\Omega=\D$, according to the Beurling-Lax-Halmos theorem, any quotient module of  $ H^2_{\E}(\D)$ is a Beurling quotient module. In stark contrast, when $\Omega=\D_2^{(\infty)}$ or $\D^n,$ where $n>1$, the scenario undergoes a significant change: typically, a quotient module in $H^2_\E(\Omega)$ does not necessarily possess a Beurling-type representation. For example, the quotient module $$\mathcal Q=\overline{span}\{1, z_1^n, z_2^n, z_1z_2: n\geq 1\}$$ in $H^2(\D_2^{(\infty)})$ does not possess a Beurling-type representation. This naturally leads to the following question:

\textbf{Question:} {\em What are all the sequences $\textbf T$ of commuting contractions that are unitary equivalent to $(P_{\mathcal Q}M_{z_i}|_{\mathcal Q})_{i\geq 1}$ for some Beurling quotient module $\mathcal Q$ of $H^2_\E(\D_2^{(\infty)})$?}

A concrete characterization of all such sequences in terms of some operator identity is obtained (see Theorem~\ref{CharacterizationdilationBeurlingquotientmodule} below),  which generalizes Theorem 3.2 from \cite{BKD} to a sequence of commuting contractions. This is obtained by first finding a characterization of Beurling quotient modules of $H^2_\E(\D_2^{(\infty)})$ in terms of compression operators (see Theorem \ref{compressionBeurlingquotientmodule} below). Along the way, we observe  that a submodule (that is, orthogonal complement of a quotient module) $\mathcal S$ of $H^2_\E(\D_2^{(\infty)})$ is doubly commuting, that is 
$$(M_{z_j}|_\S)^*(M_{z_i}|_\S)=(M_{z_i}|_\S)(M_{z_j}|_\S)^*\quad (i\neq j)$$
if and only  if $\mathcal S$ is a Brehmer submodule of $H^2_\E(\D_2^{(\infty)})$, that is the sequence of isometries $\textbf{M}_z|_{\mathcal S}=(M_{z_1}|_{\mathcal S},M_{z_2}|_{\mathcal S},...)$ satisfies Brehmer's positivity. This extends the corresponding result for submodules of $H^2_{\mathcal E}(\mathbb D^n)$ obtained in \cite[Theorem 7.6]{Rani}. Furthermore, the approach used in our proof is different and provides a simpler proof even for the case of $H^2_{\mathcal E}(\mathbb D^n)$.

The article is organized as follows. In the next section, we show the equivalence of Brehmer's positivity and the existence of minimal doubly commuting isometric dilation. A characterization of sequences of commuting contractions which are unitarily equivalent to $P_{\mathcal Q}\textbf{M}_{z}|_{\mathcal Q}$ for some quotient module $\mathcal Q$ of $H^2_\E(\D_2^{(\infty)})$ is obtained in Section~\ref{Section:2}. In the last section, we characterize all such sequences for which the model space $\mathcal Q$ is a Beurling quotient module.

\section{Brehmer's positivity and minimal doubly commuting isometric dilation}\label{Section:1}
A sequence of doubly commuting contractions certainly satisfies Brehmer's positivity. However, a sequence of commuting contractions satisfying Brehmer's positivity is not doubly commuting, in general. For instance, consider $$\mathcal Q=\overline{span}\{1, z_1z_2, z_i^n: i,n\geq 1\}\subset H^2(\D_2^{(\infty)}),$$ 
and for $i\in \N,$ define $$T_i:=P_\mathcal QM_{z_i}|_\mathcal Q.$$ Then, as $\mathcal Q$ is a quotient module, for any finite subset $F$ of $\mathbb N$, 
\[
\Delta_F(\mathbf T)= 
P_{\mathcal Q}\Delta_F(\textbf{M}_{z})P_{\mathcal Q}\ge 0.
\]
Therefore, $\textbf T=(T_i)_{i\geq 1}$ satisfies Brehmer's positivity. But it is not doubly commuting as
\[
P_{\mathcal Q}M_{z_1}M_{z_2}^*(z_1z_2)=z_1^2\neq 0 =  M_{z_2}^*P_{\mathcal Q}M_{z_1}(z_1z_2).
\]
Hence, the set of all sequences of commuting contractions that satisfy Brehmer's positivity is strictly larger than the set of all sequences of doubly commuting contractions. It is a well-established result that any sequence of doubly commuting contractions has a minimal $*$-regular isometric dilation, which is unique up to unitary equivalence. In addition, the dilation consists of doubly commuting isometries (see \cite[Theorem 3.10]{Solel}, \cite[Theorem 4.2]{Shalit}). In this section, we extend this result to a sequence of commuting contractions satisfying Brehmer's positivity. A sequence $\textbf{T}=(T_n)_{n\geq 1}$ of commuting contractions on a Hilbert space $\H$ is said to be pure if each $T_n$ is pure. The equivalence $(1)\iff (2)$ in the following theorem follows immediately from \cite[Chapter I, Theorem 9.1]{Nagy-Foias}. To establish $(2)\implies(3)$ and the uniqueness assertion, we use ideas from the proofs of \cite[Theorem 3.10]{Solel} and \cite[Proposition 3.7]{Solel}, respectively. We also repeatedly use the standard fact that a minimal isometric dilation of a commuting tuple or sequence of contractions is a co-extension.

\begin{theorem}\label{*-regular dilation}
Let $\textbf{T}=(T_n)_{n\geq 1}$ be a sequence of commuting contractions on a Hilbert space $\H$. Then the following are equivalent:

\begin{enumerate}
\item[(1)] 
$\textbf{T}$ satisfies Brehmer's positivity.

\item[(2)]
$\textbf{T}$ has a minimal $*$-regular isometric dilation.

\item[(3)]
$\textbf{T}$ has a minimal doubly commuting isometric dilation.
\end{enumerate}

Moreover, if $\textbf{T}$ satisfies any of the above conditions, then the minimal doubly commuting isometric dilation is unique up to unitary equivalence. Furthermore, it is pure if and only if  $\textbf{T}$ is pure.  
\end{theorem}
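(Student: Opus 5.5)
The plan is to prove $(1)\iff(2)$ from the unitary dilation theorem, then $(2)\Rightarrow(3)\Rightarrow(1)$, and finally uniqueness and purity.

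\emph{Step 1: $(1)\iff(2)$.} By \cite[Chapter I, Theorem 9.1]{Nagy-Foias}, Brehmer's positivity is equivalent to the existence of a $*$-regular unitary dilation $\mathbf U$ on some $\mathcal K$. Given $\mathbf U$, I set
\[
\K_+=\overline{span}\{\mathbf U^\alpha h:\alpha\in\Z_+^{(\infty)},\ h\in\H\}.
\]
The restriction $\mathbf V=\mathbf U|_{\K_+}$ is a minimal isometric dilation. It is $*$-regular because
\[
P_\H\mathbf V^{*\alpha}\mathbf V^\beta|_\H=P_\H\mathbf U^{*\alpha}\mathbf U^\beta|_\H .
\]
Conversely, I extend a $*$-regular isometric dilation to a commuting unitary extension, for instance via the minimal unitary extension of a sequence of commuting isometries (an inductive limit construction). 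The $*$-regularity identities persist, so Brehmer's positivity follows.

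\emph{Step 2: $(2)\Rightarrow(3)$.} Let $\mathbf V$ be a minimal $*$-regular isometric dilation on $\K$. I must show $V_i^*V_j=V_jV_i^*$ for $i\ne j$. By minimality it suffices to check
\[
\langle V_i^*V_j\mathbf V^\alpha h,\mathbf V^\beta k\rangle=\langle V_jV_i^*\mathbf V^\alpha h,\mathbf V^\beta k\rangle
\]
for $h,k\in\H$. Both sides reduce to expressions $\langle P_\H\mathbf V^{*\gamma}\mathbf V^{\delta}h,k\rangle$ with $\gamma\wedge\delta=0$ after cancelling common powers, using $V_m^*V_m=I$. The left side is immediate. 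For the right side I need to evaluate $V_i^*\mathbf V^\alpha h$, which is the delicate point.

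Following \cite[Theorem 3.10]{Solel}, I would first prove the key identity
\[
\mathbf V^{*\gamma}\mathbf V^\delta h=\mathbf V^\delta\mathbf V^{*\gamma}h,\qquad h\in\H,\ \gamma\wedge\delta=0.
\]
This holds because both vectors have the same norm and the same inner products against a total set, both computed via $*$-regularity and the co-extension property $\mathbf V^{*\gamma}h=\mathbf T^{*\gamma}h\in\H$. Writing $\mathbf V^\alpha=V_i^{\alpha_i}\mathbf V^{\alpha'}$ and using the identity, $V_i^*\mathbf V^\alpha h$ is either $V_i^{\alpha_i-1}\mathbf V^{\alpha'}h$ or $\mathbf V^{\alpha'}T_i^*h$. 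Either form lies in the span and makes both sides agree. I expect this norm and inner product verification to be the main obstacle, since it requires careful bookkeeping of the infinitely supported but finitely nonzero multi-indices.

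\emph{Step 3: $(3)\Rightarrow(1)$.} If $\mathbf V$ is a doubly commuting minimal dilation, it is a co-extension, so $T_G T_G^*=P_\H V_GV_G^*|_\H$. Hence $\Delta_F(\mathbf T)=P_\H\Delta_F(\mathbf V)|_\H$. For doubly commuting isometries,
\[
\Delta_F(\mathbf V)=\prod_{i\in F}(I-V_iV_i^*),
\]
which is a product of commuting projections and therefore positive.

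\emph{Step 4: uniqueness and purity.} A minimal doubly commuting dilation is $*$-regular: from double commutation and the co-extension property, $P_\H\mathbf V^{*\alpha}\mathbf V^\beta h=P_\H\mathbf V^\beta\mathbf V^{*\alpha}h=\mathbf T^\beta\mathbf T^{*\alpha}h$. So any two such dilations $\mathbf V,\mathbf V'$ satisfy
\[
\langle\mathbf V^\alpha h,\mathbf V^\beta k\rangle=\langle\mathbf V'^\alpha h,\mathbf V'^\beta k\rangle ,
\]
since cancelling $\alpha\wedge\beta$ reduces both to $\langle \mathbf T^{\alpha-\alpha\wedge\beta}\mathbf T^{*(\beta-\alpha\wedge\beta)}h,k\rangle$. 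The map $\mathbf V^\alpha h\mapsto\mathbf V'^\alpha h$ then extends to a unitary intertwining the two dilations, as in \cite[Proposition 3.7]{Solel}.

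For purity, if $V_i$ is pure then $\|T_i^{*n}h\|=\|V_i^{*n}h\|\to0$. Conversely, if $T_i$ is pure, double commutation gives
\[
V_i^{*n}\mathbf V^\alpha h=\mathbf V^{\alpha'}V_i^{*(n-\alpha_i)}h=\mathbf V^{\alpha'}T_i^{*(n-\alpha_i)}h\to0 \quad (n\ge\alpha_i),
\]
where $\alpha'$ is $\alpha$ with the $i$-th entry set to $0$. Since the powers $V_i^{*n}$ are uniformly bounded and these vectors span a dense subspace, $V_i^{*n}\to0$ strongly.
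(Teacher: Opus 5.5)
Your proof is correct and follows the paper's route: Sz.-Nagy--Foias for $(1)\iff(2)$; for $(2)\Rightarrow(3)$, the commutation identity $V_j^*\mathbf V^\alpha h=\mathbf V^\alpha T_j^*h$ (for $\alpha_j=0$, $h\in\H$), proved by testing against the total set $\{\mathbf V^\gamma k\}$; and the same Gram-matrix argument for uniqueness and density argument for purity. The differences are minor. You state the identity for general $\gamma\wedge\delta=0$ rather than only $\gamma=e_j$. You also close the cycle with a direct $(3)\Rightarrow(1)$, via $\Delta_F(\mathbf T)=P_\H\prod_{i\in F}(I-V_iV_i^*)|_\H\ge 0$, instead of the paper's $(3)\Rightarrow(2)$; this has the small bonus that $(2)\Rightarrow(1)$ no longer needs unitary extensions.
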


\begin{proof}
$(1)\implies(2).$
Suppose $\textbf{T}$ satisfies Brehmer's positivity. Then a $*$-regular unitary dilation $\textbf{U}=(U_n)_{n\geq 1}$ for the sequence $\textbf{T}$ exists; see \cite[Chapter I, Theorem 9.1]{Nagy-Foias}. Let $\tilde{\K}:= \overline{span}\{\textbf{U}^\alpha(h):h\in \H, \alpha \in \Z_+^{(\infty)}\}.$ Then the sequence $\textbf{V}=(V_n)_{n\geq 1}$ is a minimal $*$-regular isometric dilation of $\textbf{T}$ where $V_n:=U_n|_{\tilde \K}.$

$(2)\implies(1).$ Suppose $\textbf{V}=(V_n)_{n\geq 1}$ is a minimal $*$-regular isometric dilation  of $\textbf{T}$. Let $\textbf{U}=(U_n)_{n\geq 1}$ be the minimal unitary extension of $\textbf{V}.$ Then $\textbf{U}$ is a minimal $*$-regular unitary dilation  of $\textbf{T}$. Hence, by \cite[Chapter I, Theorem 9.1]{Nagy-Foias}, $\textbf{T}$ satisfies Brehmer's positivity.

$(2)\implies(3).$ Suppose $\textbf{T}$ has a minimal $*$-regular isometric dilation $\textbf{V}.$ We show that $\textbf{V}$ consists of doubly commuting isometries. Let $i,j \in \N$ with $i\neq j.$ Suppose $\alpha, \beta \in \Z_+^{(\infty)}$ and $h_1,h_2\in \H.$ Note that \begin{align}\label{simplification}\langle V_j^*V_i (\textbf{V}^\alpha h_1), (\textbf{V}^\beta h_2)\rangle=\langle {\textbf{V}^*}^{\beta+e_j}\textbf{V}^{\alpha +e_i}(h_1), h_2  \rangle.\end{align}
If $\alpha_j \geq 1$, 
where $\alpha=(\alpha_k)_{k\geq 1}$,
then 
\begin{align}\label{doublycommuting}
\langle V_iV_j^* (\textbf{V}^\alpha h_1), \textbf{V}^\beta h_2\rangle=\langle {\textbf{V}^*}^{\beta+e_j}\textbf{V}^{\alpha +e_i}h_1, h_2  \rangle=\langle V_j^*V_i (\textbf{V}^\alpha h_1), \textbf{V}^\beta h_2\rangle.
\end{align} 
The case where $\beta_i\ge 1$ can be dealt with in a similar way. 
Now suppose that $\alpha_j=0=\beta_i.$ To demonstrate equality ~\eqref{doublycommuting} in this case, we require the following claim.

\textbf{Claim:} Let $\alpha \in \Z_+^{(\infty)}$ with $\alpha_j=0.$ Then $V_j^*\textbf{V}^\alpha (h)=\textbf{V}^\alpha V_j^*(h)$ for all $h \in \H.$

\textit{Proof of the claim:} Let $\gamma \in \Z_+^{(\infty)}$ and $\tilde{h} \in \H$.
Then \begin{align*}
\langle V_j^*\textbf{V}^\alpha (h), \textbf{V}^\gamma (\tilde{h})\rangle
&=\langle P_\H {\textbf{V}^*}^{\gamma +e_j}\textbf{V}^\alpha (h), \tilde {h}\rangle\\
&=\langle P_\H {\textbf{V}^*}^{\gamma^{'}}\textbf{V}^{\alpha^{'}} (h), \tilde {h}\rangle\\
&=\langle \textbf{T}^{\alpha^{'}}{\textbf{T}^*}^{\gamma^{'}}(h), \tilde{h}\rangle 
\end{align*} where $\gamma^{'}=\gamma+e_j- ((\gamma+e_j) \wedge \alpha)$ and $\alpha {'}=\alpha-((\gamma+e_j)\wedge \alpha)$ and 
\begin{align*}
\langle \textbf{V}^\alpha V_j^{*}(h), \textbf{V}^{\gamma}(\tilde h)\rangle
&=\langle {\textbf{V}^*}^{\gamma}\textbf{V}^{\alpha}T_j^*(h), \tilde{h}\rangle\\
&=\langle P_\H {\textbf{V}^*}^{\gamma^{''}}\textbf{V}^{\alpha^{''}} (T_j^*h), \tilde {h}\rangle\\
&=\langle \textbf{T}^{\alpha^{''}}{\textbf{T}^*}^{\gamma^{''}+e_j}(h), \tilde{h}\rangle
\end{align*} 
where $\gamma^{''}=\gamma- (\gamma \wedge \alpha)$ and $\alpha^ {''}=\alpha-(\gamma\wedge \alpha)$

The proof of the claim follows by noting that $\gamma'' + e_j = \gamma'$ and $\alpha'' = \alpha',$ as $\alpha_j = 0.$

Now we are ready to prove equality \eqref{doublycommuting} for the case where $\alpha_j=0=\beta_i$, using the aforementioned claim. To this end, note that \begin{align*}
\langle V_j^*(\textbf{V}^\alpha h_1), V_i^*(\textbf{V}^\beta h_2)\rangle
&=\langle \textbf{V}^\alpha V_j^*h_1, \textbf{V}^\beta V_i^*h_2\rangle\\
&=\langle \textbf{V}^\alpha T_j^*h_1, \textbf{V}^\beta T_i^*h_2\rangle\\
&=\langle P_{\H}{\textbf{V}^*}^{\beta ^{'}}{\textbf{V}}^{\alpha ^{'}}T_j^*h_1, T_i^*h_2\rangle\\
&=\langle \textbf{T}^{\alpha^{'}+e_i}{\textbf{T}^*}^{\beta ^{'}+e_j} h_1,  h_2\rangle 
\end{align*} where $\beta{'}=\beta-(\alpha\wedge \beta)$ and $\alpha{'}=\alpha-(\alpha\wedge \beta)$ and 

\begin{align*}
\langle {\textbf{V}^*}^{\beta+e_j}\textbf{V}^{\alpha+e_i}(h_1),h_2\rangle
&=\langle P_{\H}{\textbf{V}^*}^{\beta ^{''}}\textbf{V}^{\alpha^{''}}(h_1),h_2\rangle\\
&=\langle \textbf{T}^{\alpha^{''}}{\textbf{T}^*}^{\beta ^{''}}(h_1),h_2\rangle
\end{align*} 
where $\beta^{''}=\beta+e_j-((\alpha+e_i)\wedge (\beta+e_j))$ and $\alpha^{''}=\alpha+e_i-((\alpha+e_i)\wedge (\beta+e_j))$

The doubly commutativity of $\textbf{V}$ now follows by observing that $\alpha' + e_i = \alpha''$ and $\beta' + e_j = \beta''$ (due to the case where $\alpha_j = 0 = \beta_i$) and by referencing \cref{simplification}, along with the fact that the dilation is minimal.

$(3)\implies(2).$ Let $\textbf{V}$ be a minimal doubly commuting isometric dilation of $\textbf{T}.$ Then, using the doubly commutativity of $\textbf{V}$ and its minimality, we get that $\textbf{V}$ is a $*$-regular isometric dilation. 

Thus, we have established the equivalences. Next, suppose $\textbf{T}$ satisfies any of the equivalent conditions in the statement of the theorem. We show that its minimal doubly commuting isometric dilation is unique up to  unitary equivalence. Let $\textbf{V}=(V_n)_{n\geq 1}$ on $\K$ and $\textbf{W}=(W_n)_{n\geq 1}$ on $\K^{'}$ be two minimal doubly commuting isometric dilations of $\textbf{T}.$ Then, $\textbf{V}$ and $\textbf{W}$ are two minimal $*$-regular isometric dilations of $\textbf{T}$ (see $(3)\implies(2)$ above). For any $\alpha, \beta \in \Z_+^{(\infty)}$ and $h,\tilde{h}\in \H,$ we have 
\begin{align*}
\langle \textbf{V}^\alpha(h), \textbf{V}^\beta(\tilde{h})\rangle
&=\langle P_\H{\textbf{V}^*}^\beta\textbf{V}^\alpha (h), \tilde{h}\rangle\\
&=\langle P_\H{\textbf{V}^*}^{\beta{'}} \textbf{V}^{\alpha{'}} (h), \tilde{h}\rangle\\
&=\langle \textbf{T}^{\alpha{'}}{\textbf{T}^*}^{\beta{'}}(h),\tilde{h}\rangle\\
&=\langle \textbf{W}^\alpha(h), \textbf{W}^\beta(\tilde{h})\rangle,
\end{align*}
where $\beta{'}=\beta- (\alpha \wedge \beta)$  and $\alpha{'}=\alpha -(\alpha \wedge \beta)$ (note that $\alpha{'}\wedge \beta{'}=(0,0,...)$). Therefore, we obtain the well-defined surjective isometry $$\Gamma: span\{\textbf{V}^\alpha(h): \alpha \in \Z_+^{(\infty)}, h\in \H\}\rightarrow span\{\textbf{W}^\alpha(h): \alpha \in \Z_+^{(\infty)}, h\in \H\}$$ given by $\Gamma(\textbf{V}^\alpha h)=\textbf{W}^\alpha h$ for $\alpha \in \Z_+^{(\infty)}$ and $h\in \H.$ Hence, $\Gamma$ extends to a unitary from $\K$ to $\K^{'}$.  It is easy to verify that $\Gamma V_n\Gamma^*=W_n$ for $n\geq 1,$ and $\Gamma(h)=h$ for all $h\in \H.$ This shows that the minimal doubly commuting isometric dilation is unique up to  unitary equivalence. 

For the furthermore part, let $\textbf{V}$ be the minimal  doubly commuting isometric dilation of $\textbf{T}.$ Since $\textbf{V}$ is minimal, the space $\H$ is invariant under $V_n^*,$ and $V_n^*|_{\H}=T_n^*$ for all $n\in \N.$ Hence, $\textbf{T}$ is pure if $\textbf{V}$ is pure.

Conversely, suppose $\textbf{T}$ is pure. Then using the fact that $\textbf{V}$ is doubly commuting and $V_n^*|_{\H}=T_n^*$ for all $n\in \N,$ we get that for $m>\alpha_n$ and $h\in \H$, $$\|{V_n^*}^m(\textbf{V}^\alpha h)\|=\|{T_n^*}^{m-\alpha_n}h\|.$$ Hence, by the minimality, we get that $\textbf{V}$ is pure.
This completes the proof. 
\end{proof}
The following remark follows from the proof of the above theorem.
\begin{remark}\label{*-dc}
Let $\textbf{T}=(T_n)_{n\geq 1}$ be a sequence of commuting contractions on a Hilbert space $\H$.
\begin{enumerate}
\item A sequence $\textbf{V}=(V_n)_{n\geq 1}$ is
a minimal $*$-regular isometric dilation of $\textbf{T}$ if and only if it is a minimal doubly commuting isometric dilation of $\textbf{T}$.

\item The minimal $*$-regular isometric dilation of $\textbf{T}$ is unique up to  unitary equivalence, if it exists.
\end{enumerate} 
\end{remark}


Since any sequence of doubly commuting contractions satisfies Brehmer's positivity, the following corollary, established in \cite[Theorem 3.10]{Solel}, is a straightforward consequence of Theorem \ref{*-regular dilation} and Remark \ref{*-dc}. 
\begin{corollary} Let $\textbf{T}$ be a sequence of doubly commuting contractions on a Hilbert space. Then it has a minimal $*$-regular isometric dilation $\textbf{V},$  which is unique up to unitary equivalence. Moreover, $\textbf{V}$ consists of doubly commuting isometries.

Furthermore,
$\textbf{T}$ is pure if and only if $\textbf{V}$ is pure. 
\end{corollary}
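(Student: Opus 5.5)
The corollary follows from Theorem~\ref{*-regular dilation} and Remark~\ref{*-dc} once we know that a doubly commuting sequence satisfies Brehmer's positivity. So the plan is to prove that positivity fact and then read off each assertion from the results already established.

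\textbf{Step 1: Brehmer's positivity.} Fix a finite set $F=\{n_1,\dots,n_k\}\subset\N$. For $G=\{m_1,\dots,m_l\}\subset F$ we have $T_G T_G^* = T_{m_1}\cdots T_{m_l}T_{m_l}^*\cdots T_{m_1}^*$. Since $T_i^*$ commutes with $T_j$ for $i\neq j$, and the $T_j^*$ commute among themselves, we can move each $T_{m_r}^*$ next to $T_{m_r}$. This gives
\[
T_GT_G^*=\prod_{m\in G}T_mT_m^*,
\]
where the factors $D_m:=T_mT_m^*$ commute pairwise. The factors commute because $T_mT_m^*T_pT_p^*=T_pT_p^*T_mT_m^*$ for $m\neq p$, again by double commutativity. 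Hence
\[
\Delta_F(\mathbf T)=\prod_{m\in F}(I-T_mT_m^*).
\]
Each factor $I-T_mT_m^*$ is positive because $T_m$ is a contraction, and the factors commute pairwise. A product of commuting positive operators is positive: write it as a product of commuting square roots, $A^{1/2}B^{1/2}\cdots$, each of which commutes with the others. Therefore $\Delta_F(\mathbf T)\ge 0$.

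\textbf{Step 2: Applying the theorem.} Theorem~\ref{*-regular dilation}, $(1)\Rightarrow(2)$, gives a minimal $*$-regular isometric dilation $\mathbf V$. Remark~\ref{*-dc}(2) gives its uniqueness up to unitary equivalence. Remark~\ref{*-dc}(1) shows that $\mathbf V$ is also a minimal doubly commuting isometric dilation, so $\mathbf V$ consists of doubly commuting isometries. The purity equivalence is the last assertion of Theorem~\ref{*-regular dilation}, applied to $\mathbf V$ viewed as the minimal doubly commuting dilation.

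\textbf{Main obstacle.} The only nontrivial point is the factorization in Step 1. It relies on double commutativity, $T_i^*T_j=T_jT_i^*$ for $i\neq j$, which lets us reorder the adjoints past the other operators. Everything else is quoting the earlier results.
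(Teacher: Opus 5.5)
Your proposal is correct and follows the paper's route: the paper obtains the corollary by observing that a doubly commuting sequence satisfies Brehmer's positivity and then invoking Theorem~\ref{*-regular dilation} and Remark~\ref{*-dc}, exactly as in your Step 2. Your Step 1 supplies the factorization $\Delta_F(\mathbf T)=\prod_{m\in F}(I-T_mT_m^*)$ into commuting positive factors, which the paper simply takes as evident.
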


\section{Analytic model for sequence of commuting contractions}\label{Section:2}
In \cite[Corollary 1.4]{Dan-Guo}, the authors provided a characterization of all sequences of doubly commuting pure contractions that have $\textbf{M}_z$ on $H^2_\E(\D_2^{(\infty)})$  as their minimal isometric dilation for some Hilbert space $\E$. In this section, we characterize all sequences of commuting contractions that have the sequence $\textbf{M}_z$ on $H^2_\E(\D_2^{(\infty)})$ as their minimal isometric dilation for some Hilbert space $\E$. This improves the result in \cite{Dan-Guo}, as it removes the assumption that the sequence consists of doubly commuting contractions. To address this, we introduce the concept of the defect operator for sequences of commuting contractions that satisfy Brehmer's positivity. This definition extends the previously established notion of the defect operator for sequences of doubly commuting contractions.

Let $\textbf{T}=(T_n)_{n\geq 1}$ be a sequence of commuting contractions on $\H$ that satisfies Brehmer's positivity. Set \[
[n]=\{1,\ldots,n\}, \text{ with } [0]=\emptyset.\] 
Then with the above notation, $(\Delta_{[n]}(\mathbf T))_{n\ge 1}$ is a decreasing sequence of positive operators because $$\Delta_{[n]}(\mathbf T)-\Delta_{[n+1]}(\mathbf T)=T_{n+1}\Delta_{[n]}(\mathbf T)T_{n+1}^*  
$$ for all $n\in\mathbb N$. Consequently, the sequence $(\Delta_{[n]}(\mathbf T))_{n\geq 1}$ is a decreasing sequence of positive operators and therefore has a strong operator limit. Define $$D_{\textbf{T}^*}^2:=\sotlim_{n\to \infty}\Delta_{[n]}(\mathbf T).$$

We call the positive square root of $D_{\textbf{T}^*}^2$ the \textit{defect operator} of $\textbf{T}$ and denote it by $D_{\textbf{T}^*}.$ The \textit{defect space} $\mathcal D_{\textbf{T}^*}$ of $\textbf{T}$ is defined as $\mathcal D_{\textbf{T}^*}:=\overline{\text{Ran}}D_{\textbf{T}^*}$. It should be noted that the above limit is independent of the enumeration of the sequence $\mathbf T$ since the finite subsets form a directed set and every increasing finite exhaustion is cofinal. If the sequence $\textbf T$ is doubly commuting then 
$$D_{\textbf{T}^*}^2=\sotlim_{n\to \infty}(I-T_1T_1^*)\cdots(I-T_nT_n^*).$$
Thus, the notion of defect operator, as defined earlier, coincides with that in \cite[page 305]{Dan-Guo} when restricted to the class of doubly commuting contractions. For any finite subset $F\subset\mathbb N$ and $j\notin F$, we also have
\begin{equation}\label{monotonicity}
 \Delta_{F\cup\{j\}}(\mathbf T)=\Delta_F(\mathbf T)-T_j\Delta_F(\mathbf T)T_j^*.
\end{equation}
Consequently, the finite defects decrease under inclusion and lie between
$0$ and $I$. This alow us to define 
\[
\T_{(n}(\mathbf T):=\sotlim_{m\to\infty}
          \Delta_{\{n+1,\ldots,m\}}(\mathbf T)\quad(n\ge0).\]
We call $\T_{(n}(\mathbf T)$ as the \textit{tail defect operator}. Monotonicity of the finite defects
also gives $0\le \T_{(n}(\mathbf T)\le \T_{(n+1}(\mathbf T)\le I$.
The following lemma determines the tail defect operators and their strong operator limit, which we will need in the sequel. The support of an element $\alpha\in \Z_+^{(\infty)}$ is defined as $\supp\alpha:=\{ i\in \mathbb N: \alpha_i>0\}$. 
\begin{lemma}\label{tail defect}
Let $\mathbf T$ be a sequence of commuting pure contractions on $\mathcal H$ satisfying Brehmer's positivity. Then for every $n\ge0$,
\begin{equation}\label{eq:partial}
 \T_{(n}(\mathbf T)=\sum_{\substack{\alpha\in\Z_+^{(\infty)}\\ \supp\alpha\subseteq[n]}}
        T^\alpha D_{\mathbf T^*}^2T^{*\alpha}, \text { and }  \sotlim_{n\to\infty}\T_{(n}(\mathbf T)=\sum_{\alpha\in\Z_+^{(\infty)}}T^\alpha D_{\mathbf T^*}^2T^{*\alpha},
\end{equation}
where the sum converges in the strong operator topology. 
\end{lemma}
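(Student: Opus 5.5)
We would argue by induction on $n$, reducing the statement to a one-variable identity that relies on purity. The base case $n=0$ is immediate: $\T_{(0}(\mathbf T)=\sotlim_{m}\Delta_{[m]}(\mathbf T)=D_{\mathbf T^*}^2$, and the only $\alpha$ with $\supp\alpha\subseteq[0]=\emptyset$ is $\alpha=0$.

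\textbf{One-variable identity.} Let $F$ be a finite set and $j\notin F$. Rewriting \eqref{monotonicity} as $\Delta_F=\Delta_{F\cup\{j\}}+T_j\Delta_FT_j^*$ and iterating $N$ times gives
\[
\Delta_F(\mathbf T)=\sum_{k=0}^{N-1}T_j^k\Delta_{F\cup\{j\}}(\mathbf T)T_j^{*k}+T_j^N\Delta_F(\mathbf T)T_j^{*N}.
\]
Since $0\le\Delta_F\le I$, we have $\langle T_j^N\Delta_FT_j^{*N}h,h\rangle\le\|T_j^{*N}h\|^2$, which tends to $0$ because $T_j$ is pure. Hence
\[
\Delta_F=\sum_{k\ge0}T_j^k\Delta_{F\cup\{j\}}T_j^{*k}.
\]
The partial sums are increasing, positive and bounded by $\Delta_F\le I$, so the series converges in SOT and not merely weakly.

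\textbf{Inductive step.} Apply this identity with $F=\{n+1,\dots,m\}$ and $j=n$ (for $n\ge1$), so that $F\cup\{j\}=\{n,\dots,m\}$:
\[
\Delta_{\{n+1,\dots,m\}}=\sum_{k\ge0}T_n^k\Delta_{\{n,\dots,m\}}T_n^{*k}.
\]
Now let $m\to\infty$. The left side tends in SOT to $\T_{(n}$. On the right, each term $\langle \Delta_{\{n,\dots,m\}}T_n^{*k}h,T_n^{*k}h\rangle$ is nonnegative and decreasing in $m$, with limit $\langle\T_{(n-1}T_n^{*k}h,T_n^{*k}h\rangle$. The terms are dominated by those at $m=n$, whose sum is $\le\|h\|^2$. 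Dominated convergence for series therefore justifies exchanging the limit in $m$ with the sum over $k$. This yields
\[
\T_{(n}=\sum_{k\ge0}T_n^k\,\T_{(n-1}\,T_n^{*k},
\]
first weakly and then strongly, by the same monotone-boundedness argument as above. Substituting the inductive hypothesis for $\T_{(n-1}$ and using commutativity ($T_n^kT^\alpha=T^{\alpha+ke_n}$), the double sum reorganizes into the sum over $\alpha$ with $\supp\alpha\subseteq[n]$. The reorganization is legitimate because all terms are positive operators, so the order of summation is irrelevant for the quadratic forms.

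\textbf{The limit in $n$.} The operators $\T_{(n}$ increase in $n$ and are bounded by $I$, so $\sotlim_n\T_{(n}$ exists. Each $\T_{(n}$ is the partial sum of the positive series $\sum_{\alpha}T^\alpha D_{\mathbf T^*}^2T^{*\alpha}$ over the increasing exhausting family $\{\alpha:\supp\alpha\subseteq[n]\}$. Hence the full series converges in SOT, since its finite partial sums are positive and bounded by $I$, and it equals this limit.

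The main obstacle is the interchange of $\lim_m$ with the infinite sum over $k$. I would handle it exactly as above, by working with quadratic forms and using monotonicity in $m$ together with the uniform bound $\le\|h\|^2$.
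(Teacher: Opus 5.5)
Your proof is correct and uses the same mechanism as the paper's: invert $\id-\Phi_i$ (where $\Phi_i(X)=T_iXT_i^*$) by a geometric series whose remainder $\Phi_i^N(\cdot)$ tends strongly to zero because $T_i$ is pure. The differences are only organizational. You peel off one variable at a time by induction and invert before letting $m\to\infty$, which is why you need the dominated-convergence interchange. The paper instead passes to the limit in $m$ first, obtaining $D_{\mathbf T^*}^2=\prod_{i=1}^n(\id-\Phi_i)\T_{(n}(\mathbf T)$, and then telescopes over the cubes $0\le\alpha_i<r$ in all $n$ variables at once, so no interchange of limits is required.
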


\begin{proof}
For each $i\in\mathbb N$, define maps on $\mathcal B(\mathcal H)$ by $\Phi_i(X)=T_iXT_i^*$. Then it is easy to see that $\Phi_i$ and $\Phi_j$ commutes with each other. We use $\id$ for the identity map on $\mathcal B(\mathcal H)$.
The repeated application of \cref{monotonicity} shows that, for $m\ge n+1$,
\[
 \Delta_{[m]}(\mathbf T)
 =\Big(\prod_{i=1}^{n}(\id-\Phi_i)\Big)
       \Delta_{\{n+1,\ldots,m\}}(\mathbf T).
\]
Since each $\Phi_i$ preserves strong limits, passing to the strong
limit in the above finite product gives
\begin{equation*}
 D_{\mathbf T^*}^2=\Big(\prod_{i=1}^{n}(\id-\Phi_i)\Big)\T_{(n}(\mathbf T).
\end{equation*}

Fix $n\ge1$. For an integer $r\ge1$, the finite geometric-sum identity
and commutativity of the maps yield
\begin{align}
 {\sum_{\substack{\supp\alpha\subseteq[n]\\
                         0\le\alpha_i<r\ (1\le i\le n)}}}
        T^\alpha D_{\mathbf T^*}^2T^{*\alpha}
 &=\prod_{i=1}^{n}\left(\sum_{k=0}^{r-1}\Phi_i^k\right)
      \Big (\prod_{i=1}^{n}(\id-\Phi_i)\Big )\T_{(n}(\mathbf T)\notag\\
 &=\Big (\prod_{i=1}^{n}(\id-\Phi_i^r)\Big)\T_{(n}(\mathbf T)\notag\\
 &=\T_{(n}(\mathbf T)+\sum_{\emptyset\ne G\subseteq[n]}
                  (-1)^{|G|}T_G^r\T_{(n}(\mathbf T)T_G^{*r}.
 \label{eq:telescoping}
\end{align}
Every nonempty-set term on the last line tends strongly to zero as $r\longrightarrow \infty$.
Indeed, fix $i\in G$.
Note that for every non-empty subset $G$, $T_G^r$ is a contraction and if $i\in G$ then for $h\in\mathcal H$ we have
\[
 \|T_G^r\T_{(n}(\mathbf T)T_G^{*r}h\|
 \le\|\T_{(n}(\mathbf T)\|\,\|T_G^{*r}h\|
 \le\|\T_{(n}(\mathbf T)\|\,\|T_i^{*r}h\|
 \longrightarrow0.
\]
The convergence follows as $T_i$ is a pure contraction. There are only finitely many subsets $G\subseteq[n]$,
so taking limit as $r\to \infty$, \cref{eq:telescoping} proves the first identity in \cref{eq:partial}, and the second identity is an easy consequence of the first identity. This completes the proof.


\end{proof}


Let $\textbf{V}=(V_n)_{n\geq 1}$ be a sequence of doubly commuting pure isometries on $\H$. Then, it is not hard to verify that the defect space $\mathcal D_{\textbf{V}^*}=\cap_{n\in \N} \ker{V_n}^*,$ and $\mathcal D_{\textbf{V}^*}$ is a wandering subspace for $\textbf{V}.$ The sequence $\textbf{V}$ is said to be of Beurling's type if $\mathcal D_{\textbf{V}^*}$ is a generating wandering subspace, that is $$\bigoplus_{\alpha \in \Z_+^{(\infty)}} \textbf{V}^{\alpha} (\mathcal D_{\textbf{V}^*})=\H.$$

It is easy to see that $\textbf{M}_z$ on $H^2_\E(\D_2^{(\infty)})$  is of Beurling's type. In fact, $\textbf{M}_z$ serves as a model for sequences of doubly commuting pure isometries which are of Beurling's type. If $\textbf{V}$ is a finite tuple of doubly commuting pure isometries, it always exhibits Beurling's type property. However, this does not hold for a sequence of doubly commuting pure isometries, in general. Indeed, in \cite{Dan-Guo}, the authors provided numerous examples for which $\mathcal D_{\textbf{V}^*}=\{0\}$. We observe the following decomposition for a sequence of doubly commuting pure isometries.

\begin{lemma}\label{decomposition-DCPI}
Let $\textbf{V}$ be a sequence of doubly commuting pure isometries acting on $\H.$ Then, the space $\H$ decomposes into the direct sum $\H=\H_\B\oplus H_0$ of two joint-reducing subspaces of $\textbf{V}$ such that  $\textbf{V}|_{\H_\B}$ is of Beurling's type and  $\mathcal D_{(\textbf{V}|_{\H_0})^*}=\{0\}.$ 
\end{lemma}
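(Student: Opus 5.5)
The natural candidate for $\H_\B$ is the closed span generated by the defect space:
\[
\H_\B:=\overline{span}\{\textbf{V}^\alpha(\mathcal D_{\textbf{V}^*}):\alpha\in\Z_+^{(\infty)}\}=\bigoplus_{\alpha\in\Z_+^{(\infty)}}\textbf{V}^\alpha(\mathcal D_{\textbf{V}^*}),
\]
with $\H_0:=\H\ominus\H_\B$. The sum is orthogonal because $\mathcal D_{\textbf{V}^*}=\cap_n\ker V_n^*$ is wandering, as noted before the statement. The plan has three steps: show $\H_\B$ is jointly reducing, identify the defect space of $\textbf{V}|_{\H_\B}$, and show the defect space of $\textbf{V}|_{\H_0}$ is trivial.

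\emph{Joint reducing.} Invariance of $\H_\B$ under each $V_i$ is clear. For $V_i^*$, I would use double commutativity. Take $x\in\mathcal D_{\textbf{V}^*}$ and $\alpha$.
\begin{itemize}
\item If $\alpha_i\ge1$, then $V_i^*\textbf{V}^\alpha x=\textbf{V}^{\alpha-e_i}x$.
\item If $\alpha_i=0$, then $V_i^*$ commutes with $\textbf{V}^\alpha$, so $V_i^*\textbf{V}^\alpha x=\textbf{V}^\alpha V_i^*x=0$.
\end{itemize}
Hence $V_i^*\H_\B\subseteq\H_\B$, and so $\H_0$ is also jointly reducing.

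\emph{Beurling type on $\H_\B$.} Since the subspace is reducing, $(V_n|_{\H_\B})^*=V_n^*|_{\H_\B}$. Therefore
\[
\mathcal D_{(\textbf{V}|_{\H_\B})^*}=\cap_n\ker V_n^*\cap\H_\B=\mathcal D_{\textbf{V}^*}.
\]
This space is generating and wandering for $\textbf{V}|_{\H_\B}$ by construction. The restrictions remain doubly commuting pure isometries, so $\textbf{V}|_{\H_\B}$ is of Beurling's type.

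\emph{Trivial defect on $\H_0$.} In the same way,
\[
\mathcal D_{(\textbf{V}|_{\H_0})^*}=\cap_n\ker V_n^*\cap\H_0=\mathcal D_{\textbf{V}^*}\cap\H_0=\{0\},
\]
because $\mathcal D_{\textbf{V}^*}\subseteq\H_\B$.

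One step needs to be checked carefully: that the defect space of a doubly commuting isometric sequence, defined via $D^2_{\textbf{V}^*}=\sotlim_n\prod_{i\le n}(I-V_iV_i^*)$, equals $\cap_n\ker V_n^*$. The operators $I-V_iV_i^*$ are commuting projections, so the products decrease to the projection onto the intersection of their ranges. That intersection is $\cap_n\ker V_n^*$. The same applies to the restricted sequences, since they are again doubly commuting isometries. I expect no real obstacle; the only point requiring care is this identification of defect spaces under restriction to reducing subspaces.
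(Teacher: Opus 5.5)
Your proposal is correct and uses the paper's decomposition, $\H_\B=\bigoplus_{\alpha}\textbf{V}^\alpha(\mathcal D_{\textbf{V}^*})$ with $\H_0=\H_\B^\perp$. The paper only calls the rest ``routine''; you supply those details: joint reducibility via double commutativity, the identification $\mathcal D_{\textbf{V}^*}=\cap_n\ker V_n^*$ through commuting projections, and the behaviour of this defect space under restriction to reducing subspaces.
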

\begin{proof}
Set 
\[\H_\B:=\bigoplus_{\alpha \in \Z_+^{(\infty)}} \textbf{V}^{\alpha} (\mathcal D_{\textbf{V}^*}), \text{ and } 
 \H_0= {\H_\B}^\perp.
 \] 
 It is then routine to check that $\textbf{V}|_{\H_\B}$ is of Beurling's type and  $\mathcal D_{(\textbf{V}|_{\H_0})^*}=\{0\}$.
\end{proof}

According to Theorem \ref{*-regular dilation}, a sequence $\textbf{T}$ that satisfies Brehmer's positivity has the unique minimal doubly commuting isometric dilation $\textbf{V}$. Next, we delineate all sequences $\textbf{T}$ for which $\textbf{V}$ is of Beurling's type. Certain steps in the proof of the following proposition are motivated by the proof of \cite[Lemma 2.6]{Dan-Guo}.

\begin{proposition}\label{Beurlingtype*regulardilation}
Let $\textbf{T}=(T_n)_{n\geq 1}$ be a sequence of commuting contractions on $\H$ that satisfies Brehmer's positivity. Let $\textbf{V}$ be the minimal doubly commuting isometric dilation of $\textbf{T}$. 
Then $\textbf{V}$ is of Beurling's type if and only if $$\|x\|^2=\sum_{\alpha \in \Z_+^{(\infty)}}\|D_{\textbf{T}^*}{\textbf{T}^*}^\alpha x\|^2$$ for all $x\in \H.$
\end{proposition}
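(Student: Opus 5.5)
The plan is to relate the defect operator of $\textbf{T}$ to that of its dilation $\textbf{V}$ on $\K\supseteq\H$, and then use minimality. Since $\textbf{V}$ is a minimal isometric dilation, it is a co-extension, so $V_n^*|_\H=T_n^*$ and $P_\H\textbf{V}^\alpha|_\H=\textbf{T}^\alpha$. Since $\textbf{V}$ is doubly commuting, $\Delta_{[n]}(\textbf{V})=\prod_{i=1}^n(I-V_iV_i^*)$ is the projection onto $\cap_{i\le n}\ker V_i^*$. Hence $D_{\textbf{V}^*}^2$ is the projection $P_{\mathcal D}$ onto $\mathcal D:=\mathcal D_{\textbf{V}^*}=\cap_n\ker V_n^*$. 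Compressing to $\H$ gives $P_\H\Delta_F(\textbf{V})|_\H=\Delta_F(\textbf{T})$ for every finite $F$; indeed $P_\H V_GV_G^*|_\H=T_GT_G^*$ because $V_G^*\H\subseteq\H$. Taking strong limits, we get $P_\H P_{\mathcal D}|_\H=D_{\textbf{T}^*}^2$, and therefore $\|P_{\mathcal D}h\|=\|D_{\textbf{T}^*}h\|$ for all $h\in\H$.

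Next we use the wandering structure. The spaces $\textbf{V}^\alpha\mathcal D$ are mutually orthogonal, since $\mathcal D$ is wandering for the doubly commuting $\textbf{V}$. Let $\H_\B=\bigoplus_\alpha\textbf{V}^\alpha\mathcal D$ as in Lemma \ref{decomposition-DCPI}. The orthogonal projection onto $\textbf{V}^\alpha\mathcal D$ is $\textbf{V}^\alpha P_{\mathcal D}\textbf{V}^{*\alpha}$. Therefore for $x\in\H$,
\[
\|P_{\H_\B}x\|^2=\sum_{\alpha}\|P_{\mathcal D}\textbf{V}^{*\alpha}x\|^2=\sum_\alpha\|P_{\mathcal D}\textbf{T}^{*\alpha}x\|^2=\sum_\alpha\|D_{\textbf{T}^*}\textbf{T}^{*\alpha}x\|^2,
\]
using $\textbf{V}^{*\alpha}x=\textbf{T}^{*\alpha}x\in\H$ and the first step. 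Thus the stated identity holds for all $x\in\H$ if and only if $\H\subseteq\H_\B$. Note that no purity is needed: we never invoke Lemma \ref{tail defect}, only orthogonality of the wandering translates.

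To finish, observe that $\H_\B$ is invariant under every $V_n$. If $\H\subseteq\H_\B$, then minimality gives $\K=\overline{span}\{\textbf{V}^\alpha h\}\subseteq\H_\B$, so $\textbf{V}$ is of Beurling's type. Conversely, Beurling's type means $\H_\B=\K\supseteq\H$.

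The main obstacle is the identification $P_\H D_{\textbf{V}^*}^2|_\H=D_{\textbf{T}^*}^2$, together with the claim that $D^2_{\textbf{V}^*}$ is exactly the projection onto $\cap\ker V_n^*$. The second fact follows because the $I-V_iV_i^*$ are commuting projections when $\textbf{V}$ is doubly commuting, so their products decrease strongly to the projection onto the intersection. The first fact needs care in passing to the strong limit under compression; this works because compression by $P_\H$ is SOT-continuous on bounded monotone nets.
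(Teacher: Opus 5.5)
Your proof is correct. For the converse it takes a genuinely different route from the paper.

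The forward direction is essentially the paper's. Both arguments identify $\|D_{\mathbf T^*}x\|=\|D_{\mathbf V^*}x\|$ on $\H$ and then sum over the wandering translates $\mathbf V^\alpha\mathcal D_{\mathbf V^*}$. The paper gets the identification by induction on $n$ via $\Delta_{[k+1]}=\Delta_{[k]}-T_{k+1}\Delta_{[k]}T_{k+1}^*$; you get it in one line from $P_\H V_GV_G^*|_\H=T_GT_G^*$.

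For the converse, the paper builds the explicit map $\Pi x=\sum_\alpha (D_{\mathbf T^*}\mathbf T^{*\alpha}x)z^\alpha$ into $H^2_{\mathcal D_{\mathbf T^*}}(\D_2^{(\infty)})$ and checks that it is an isometric co-extension. It then proves minimality of $\mathbf M_z$ using the Dan--Guo description of reducing subspaces and the projection onto the constants. Finally it transfers Beurling's type to $\mathbf V$ through the uniqueness part of Theorem~\ref{*-regular dilation}.

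You stay inside $\K$. The identity $\|P_{\H_\B}x\|^2=\sum_\alpha\|D_{\mathbf T^*}\mathbf T^{*\alpha}x\|^2$ for $x\in\H$ settles both directions at once. The norm condition becomes exactly $\H\subseteq\H_\B$, which by minimality and $V_n$-invariance of $\H_\B$ is exactly $\H_\B=\K$. This avoids the external reducing-subspace lemmas and the uniqueness theorem. It is also quantitative: the deficiency $\|x\|^2-\sum_\alpha\|D_{\mathbf T^*}\mathbf T^{*\alpha}x\|^2$ is precisely $\|P_{\K\ominus\H_\B}x\|^2$, the mass of $x$ in the non-Beurling summand of Lemma~\ref{decomposition-DCPI}.

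What the paper's route buys is the concrete functional model, with coefficient space $\mathcal D_{\mathbf T^*}$ and dilation map $\Pi$. That model is what is used for item (4) of Theorem~\ref{CharacterizationBeurlingtype} and in Example~\ref{ex:nilpotent}. To recover it from your argument you would still need two further facts:
\begin{itemize}
\item the standard model of a Beurling-type doubly commuting sequence as $\mathbf M_z$ on $H^2_{\mathcal D_{\mathbf V^*}}(\D_2^{(\infty)})$;
\item the identification $\mathcal D_{\mathbf V^*}\cong\mathcal D_{\mathbf T^*}$, which follows from your $P_\H P_{\mathcal D}|_\H=D_{\mathbf T^*}^2$ together with minimality, since $P_{\mathcal D}\mathbf V^\alpha=0$ for $\alpha\neq 0$.
\end{itemize}
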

\begin{proof}
To show the forward direction, we first show that $\|D_{\textbf{T}^*}x\|=\|D_{\textbf{V}^*}x\|$ for all $x\in \H.$ To this end, note that $\|D_{\textbf{T}^*}x\|=\lim_{n\to \infty}\| \Delta_{[n]}(\mathbf T)^{\frac{1}{2}}x\|$ and $\|D_{\textbf{V}^*}x\|=\lim_{n\to \infty}\| \Delta_{[n]}(\mathbf V)^{\frac{1}{2}}x\|.$ Therefore, it suffices to prove that for $n\in \N$ and $x\in \H,$  $\| \Delta_{[n]}(\mathbf T)^{\frac{1}{2}}x\|=\| \Delta_{[n]}(\mathbf V)^{\frac{1}{2}}x\|.$ The identity is obviously true for $n=1$ as $\textbf V$ is a coextension of $\textbf T$ by minimality. Suppose the equality is true for $n=k.$ Then, \begin{align*}
\|\Delta_{[k+1]}(\mathbf T)^{\frac{1}{2}}x\|^2
&=\|\Delta_{[k]}(\mathbf T)^{\frac{1}{2}}x\|^2-\|\Delta_{[k]}(\mathbf T)^{\frac{1}{2}}T_{k+1}^*x\|^2\\
&=\|\Delta_{[k]}(\mathbf V)^{\frac{1}{2}}x\|^2-\|\Delta_{[k]}(\mathbf V)^{\frac{1}{2}}T_{k+1}^*x\|^2\\
&=\|\Delta_{[k]}(\mathbf V)^{\frac{1}{2}}x\|^2-\|\Delta_{[k]}(\mathbf V)^{\frac{1}{2}}V_{k+1}^*x\|^2\ (\text{by minimality})\\
&=\|\Delta_{[k+1]}(\mathbf V)^{\frac{1}{2}}x\|^2.
\end{align*}Therefore, we have shown that $\|D_{\textbf{T}^*}x\|=\|D_{\textbf{V}^*}x\|$ for all $x\in \H.$ Hence, again using the fact that $\textbf{V}$ is coextension of $\textbf T$,  we get $$\sum_{\alpha \in \Z_+^{(\infty)}}\|D_{\textbf{T}^*}{\textbf{T}^*}^\alpha x\|^2=\sum_{\alpha \in \Z_+^{(\infty)}}\|D_{\textbf{T}^*}{\textbf{V}^*}^\alpha x\|^2=\sum_{\alpha \in \Z_+^{(\infty)}}\|D_{\textbf{V}^*}{\textbf{V}^*}^\alpha x\|^2$$
for all $x\in \H.$

Now, suppose $\textbf{V}$ is of Beurling's type. Then, it is easy to see that $\sum_{\alpha \in \Z_+^{(\infty)}}\|D_{\textbf{V}^*}{\textbf{V}^*}^\alpha x\|^2=\|x\|^2$ (see also \cite[Corollary 1.4]{Dan-Guo}). Hence, the proof of the forward direction is complete.

Conversely, suppose $\|x\|^2=\sum_{\alpha \in \Z_+^{(\infty)}}\|D_{\textbf{T}^*}{\textbf{T}^*}^\alpha x\|^2$ for all $x\in \H.$ Define $$\Pi:\H\rightarrow H^2_{\mathcal D_{\textbf{T}^*}}(\D_2^{(\infty)})$$by, $$\Pi(x):=\sum_{\alpha \in \Z_+^{(\infty)}}(D_{\textbf{T}^*}{\textbf{T}^*}^\alpha x)z^\alpha.$$

Clearly, $\Pi$ is an isometry. Note that \begin{align}\label{invariance}
\Pi T_i^*=M_{z_i}^*\Pi.
\end{align}  
Hence, the sequence $\textbf{M}_z$ on $H^2_{\mathcal D_{\textbf{T}^*}}(\D_2^{(\infty)})$ is a coextension of $\textbf{T}.$ Now consider $$\L:=\overline{span}\{\textbf{M}_z^\alpha(\Pi(\H)):\alpha \in \Z_+^{(\infty)}\}.$$ Since $\Pi(\H)$ is invariant under ${\textbf{M}_z}^*$ (by  \cref{invariance}) and $\textbf{M}_z$ is a doubly commuting sequence of isometries, we get that the space $\L$ is reducing for $\textbf{M}_z.$ Therefore, by \cite[Lemmas 2.11 and 2.10]{Dan-Guo}, $\L=H^2_{\mathcal E} (\D_2^{(\infty)})$ for some closed subspace $\E$ of $\mathcal D_{\textbf{T}^*}.$ Let $P_0\in \mathcal B (H^2_{\mathcal D_{\textbf{T}^*}}(\D_2^{(\infty)}))$ be the projection onto constant functions. Then 
\[
P_0=\sotlim_{n\to \infty}\prod_{j=1}^n (I-M_{z_j}M_{z_j}^*), \text{ and } P_0(\Pi x)= D_{\textbf T^*}x.\ \quad (x\in\mathcal H)
\]
Since $\L$ is a reducing subspace, $P_0(\L)\subseteq \L $. In particular, $\mathcal D_{\textbf T^*}\subseteq \L$. This shows that $\L=H^2_{\mathcal D_{\textbf{T}^*}}(\D_2^{(\infty)}) $.  Hence, $\textbf{M}_z$ on $H^2_{\mathcal D_{\textbf{T}^*}}(\D_2^{(\infty)})$ is the minimal doubly commuting isometric dilation of $\textbf{T}.$ Therefore, the minimal doubly commuting isometric dilation of $\textbf{T}$ is of Beurling's type.
\end{proof}

We need the following useful lemma for the main result of this section. 
\begin{lemma}\label{auto_pure}
Let $\textbf{T}$ be a sequence of commuting contractions on a Hilbert space $\H$ satisfying Brehmer's positivity. 
If for all $x\in\mathcal H$, 
\begin{equation}\label{identity}\|x\|^2=\sum_{\alpha \in \Z_+^{(\infty)}}\|D_{\textbf{T}^*}{\textbf{T}^*}^\alpha x\|^2,
\end{equation}
then $\mathbf T$ is pure, that is, $T_i$ is a pure contraction for all $i$.
\end{lemma}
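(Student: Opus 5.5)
Fix $i\in\mathbb N$ and $x\in\H$. The goal is $\|T_i^{*m}x\|\to 0$ as $m\to\infty$, and the plan is to read this off as the tail of the convergent series in \eqref{identity}.

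First apply \eqref{identity} to $y=T_i^{*m}x$ in place of $x$. Since the $T_j$ commute, ${\textbf{T}^*}^\alpha T_i^{*m}={\textbf{T}^*}^{\alpha+me_i}$, so
\[
\|T_i^{*m}x\|^2=\sum_{\alpha\in\Z_+^{(\infty)}}\|D_{\textbf{T}^*}{\textbf{T}^*}^{\alpha+me_i}x\|^2=\sum_{\beta\in\Z_+^{(\infty)},\ \beta_i\ge m}\|D_{\textbf{T}^*}{\textbf{T}^*}^{\beta}x\|^2 .
\]
The second equality holds because $\alpha\mapsto\alpha+me_i$ is a bijection of $\Z_+^{(\infty)}$ onto $\{\beta:\beta_i\ge m\}$.

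Next, the full series $\sum_{\beta}\|D_{\textbf{T}^*}{\textbf{T}^*}^\beta x\|^2$ converges, with sum $\|x\|^2$, and it has nonnegative terms. The sets $A_m=\{\beta:\beta_i\ge m\}$ decrease and have empty intersection. Dominated convergence for counting measure on the countable set $\Z_+^{(\infty)}$ therefore gives $\sum_{\beta\in A_m}\|D_{\textbf{T}^*}{\textbf{T}^*}^\beta x\|^2\to 0$. Hence $T_i^{*m}x\to0$, so $T_i$ is pure.

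The only delicate point is the reindexing. It is harmless because a series of nonnegative terms over a countable index set is unordered, and the shifted index set $A_m$ sits inside the original one. Brehmer's positivity is used only to ensure that $D_{\textbf{T}^*}$ is defined. No real obstacle is expected.
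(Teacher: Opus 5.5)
Your proof is correct and follows the paper's argument: substitute $T_i^{*m}x$ into the identity, reindex via $\alpha\mapsto\alpha+me_i$ to obtain the tail $\sum_{\beta_i\ge m}\|D_{\textbf{T}^*}{\textbf{T}^*}^\beta x\|^2$, and let $m\to\infty$. Your justification that the tail vanishes (decreasing index sets with empty intersection, plus dominated convergence for counting measure) is a slightly more careful version of the paper's remark that this is ``a tail of the summable series''.
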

\begin{proof}
Let $i\in\mathbb N$. To show, $T_i$ is a pure contraction, we replace $x$ by $T_i^{* m}x$ in eq.~\eqref{identity}. We obtain \begin{align*}
\|T_i^{* m}x\|^2 &=\sum_{\alpha \in \Z_+^{(\infty)}}\|D_{\textbf{T}^*}{\textbf{T}^*}^\alpha T_i^{* m} x\|^2\\
& = \sum_{\substack{\beta\in\mathbb{Z}_+^n\\ \beta_i\geq m}}
\left\|D_{\mathbf T^*}\mathbf T^{*\beta}x
\right\|^2.
\end{align*}
The last expression is a tail of the summable series \eqref{identity} as each finitely supported multi-index in $\mathbb Z_+^{(\infty)}$ has finite $(i)$-th coordinate. Hence, it follows that $T_i^{* m}x\to 0$ as $m\to \infty$. This completes the proof. 
\end{proof}
The equivalence between $(1)$ and $(2)$ in the following theorem follows from Lemma~\ref{tail defect} and Lemma~\ref{auto_pure}. 
Whereas, the equivalence between $(2)$ and $(3)$ is a consequence of Theorem \ref{*-regular dilation} and Proposition \ref{Beurlingtype*regulardilation}. The equivalence between $(3)$ and $(4)$ is immediate.
\begin{theorem}\label{CharacterizationBeurlingtype}
Let $\textbf{T}$ be a sequence of commuting contractions on a Hilbert space $\H.$ The following statements are equivalent:

\begin{enumerate}
\item[(1)] $\mathbf T$ is pure, satisfies Brehmer's positivity and 
\[
\sotlim_{n\to \infty}\T_{(n}(\mathbf T)= I_{\mathcal H}.
\]
\item[(2)] $\textbf{T}$ satisfies Brehmer's positivity, and for all $x\in \H$,
$$\|x\|^2=\sum_{\alpha \in \Z_+^{(\infty)}}\|D_{\textbf{T}^*}{\textbf{T}^*}^\alpha x\|^2. $$ 

\item[(3)] $\textbf{T}$ has a minimal doubly commuting isometric dilation which is of Beurling's type.

\item[(4)] $\textbf{T}$ has $\textbf{M}_z$ on $H^2_\E(\D_2^{(\infty)})$ as its minimal isometric dilation, for some Hilbert space $\E.$ Equivalently, $\textbf{T}$ is unitarily equivalent to a sequence $(P_{\mathcal Q}M_{z_i}|_{\mathcal Q})_{i\geq 1}$ for some quotient module $\mathcal Q$ in $H^2_\E(\D_2^{(\infty)}).$

\end{enumerate}
\end{theorem}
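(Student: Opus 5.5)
The plan is to establish the cycle of equivalences $(1)\iff(2)\iff(3)\iff(4)$, using Lemma~\ref{tail defect} and Lemma~\ref{auto_pure} for the first link, Theorem~\ref{*-regular dilation} together with Proposition~\ref{Beurlingtype*regulardilation} for the second, and the standard model for doubly commuting pure isometries of Beurling's type for the last.

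\emph{$(1)\iff(2)$.} Assume (1). Since $\mathbf T$ is pure and satisfies Brehmer's positivity, Lemma~\ref{tail defect} gives
\[
\sotlim_{n\to\infty}\T_{(n}(\mathbf T)=\sum_{\alpha}T^\alpha D_{\mathbf T^*}^2T^{*\alpha}.
\]
Taking $\langle\cdot\, x,x\rangle$ turns this into $\sum_\alpha\|D_{\mathbf T^*}\mathbf T^{*\alpha}x\|^2$. By hypothesis the left side equals $I_\H$, so the quadratic form is $\|x\|^2$, which is (2). Conversely, assume (2). Lemma~\ref{auto_pure} shows that $\mathbf T$ is pure, so Lemma~\ref{tail defect} applies again. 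The positive operator $\sum_\alpha T^\alpha D_{\mathbf T^*}^2T^{*\alpha}$ then has quadratic form $\|x\|^2$, hence it equals $I_\H$ by polarization. That is exactly the limit condition in (1).

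\emph{$(2)\iff(3)$.} If (2) holds, Theorem~\ref{*-regular dilation} provides a unique minimal doubly commuting isometric dilation $\mathbf V$. The identity in (2) is precisely the criterion in Proposition~\ref{Beurlingtype*regulardilation}, so $\mathbf V$ is of Beurling's type. If (3) holds, then (3)$\implies$(1) of Theorem~\ref{*-regular dilation} gives Brehmer's positivity, and uniqueness identifies the given dilation with the one in Proposition~\ref{Beurlingtype*regulardilation}. The forward direction of that proposition then yields the norm identity.

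\emph{$(3)\iff(4)$.} Given (3), let $\mathbf V$ on $\K$ be of Beurling's type with wandering space $\E=\mathcal D_{\mathbf V^*}$. The map $\mathbf V^\alpha e\mapsto z^\alpha e$ is a unitary from $\K=\bigoplus_\alpha\mathbf V^\alpha\E$ onto $H^2_\E(\D_2^{(\infty)})$ intertwining $\mathbf V$ with $\mathbf M_z$. Hence $\mathbf M_z$ is a minimal isometric dilation of $\mathbf T$. By minimality the image $\mathcal Q$ of $\H$ is co-invariant, which gives the quotient-module form.

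Conversely, if $\mathbf M_z$ on $H^2_\E(\D_2^{(\infty)})$ is a minimal isometric dilation of $\mathbf T$, it is doubly commuting and of Beurling's type, since its defect space is the constants. This is the one point needing care. The ``equivalently'' clause gives only $\mathbf T\cong P_{\mathcal Q}\mathbf M_z|_{\mathcal Q}$, and this compression need not be minimal. So restrict to the reducing subspace $\overline{span}\{\mathbf M_z^\alpha\mathcal Q\}$. By \cite[Lemmas 2.11 and 2.10]{Dan-Guo}, as used in Proposition~\ref{Beurlingtype*regulardilation}, this subspace is $H^2_{\E'}(\D_2^{(\infty)})$ for some closed subspace $\E'\subseteq\E$. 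The restriction is again $\mathbf M_z$, now minimal, which gives (3).

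I expect no serious obstacle. The only delicate points are the polarization step in $(2)\implies(1)$, which needs the strong convergence supplied by Lemma~\ref{tail defect}, and the reduction to a minimal dilation in $(4)\implies(3)$.
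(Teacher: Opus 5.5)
Your proposal is correct and takes the same route as the paper: $(1)\iff(2)$ from Lemma~\ref{tail defect} and Lemma~\ref{auto_pure}, $(2)\iff(3)$ from Theorem~\ref{*-regular dilation} and Proposition~\ref{Beurlingtype*regulardilation}, and $(3)\iff(4)$ via the standard model, which the paper calls immediate. Your extra step for the ``equivalently'' clause in (4) is also sound: you pass to the reducing subspace generated by $\mathcal Q$ and use the Dan--Guo lemmas to identify it with $H^2_{\E'}(\D_2^{(\infty)})$, which fills in what the paper leaves as evident.
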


The condition (1) in the above theorem helps us to find some easily verifiable sufficient condition on $\mathbf T$, which ensure that $\mathbf T$ has minimal isometric dilation on the Hardy space. 

\begin{proposition}\label{thm:summability}
Let $\mathbf T$ be a sequence of commuting pure contractions on $\mathcal H$ satisfying Brehmer's positivity. Suppose that there
is a dense subset $\mathcal F\subseteq\mathcal H$ such that
\begin{equation}\label{eq:vector-summability}
 \sum_{i=1}^\infty\|T_i^*h\|^2<\infty\qquad(h\in\mathcal F).
\end{equation}
Then $\textbf{T}$ has $\textbf{M}_z$ on $H^2_\E(\D_2^{(\infty)})$ as its minimal isometric dilation, for some Hilbert space $\E.$

In particular, the same conclusion holds if $\sum_i\|T_i\|^2<\infty$.
\end{proposition}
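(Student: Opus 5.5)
The plan is to verify condition (1) of Theorem~\ref{CharacterizationBeurlingtype}; the conclusion then follows from the implication (1)$\Rightarrow$(4) of that theorem. Purity and Brehmer's positivity are hypotheses, so it remains only to show that $\sotlim_{n\to\infty}\T_{(n}(\mathbf T)=I_{\mathcal H}$.

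\emph{Step 1: a comparison estimate.} I will show that for every finite $F\subset\mathbb N$,
\[
I-\Delta_F(\mathbf T)\le \sum_{i\in F}T_iT_i^*.
\]
The proof is by induction on $|F|$, using \eqref{monotonicity}. For $j\notin F$,
\[
I-\Delta_{F\cup\{j\}}(\mathbf T)=(I-\Delta_F(\mathbf T))+T_j\Delta_F(\mathbf T)T_j^*\le (I-\Delta_F(\mathbf T))+T_jT_j^*.
\]
The inequality uses $\Delta_F(\mathbf T)\le I$, which holds because the finite defects decrease under inclusion starting from $\Delta_\emptyset=I$. Taking $F=\{n+1,\dots,m\}$ and letting $m\to\infty$ in the strong operator topology gives, for every $h\in\mathcal H$,
\[
\langle (I-\T_{(n}(\mathbf T))h,h\rangle\le \sum_{i=n+1}^\infty\|T_i^*h\|^2 .
\]

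\emph{Step 2: passing from the dense set to all of $\mathcal H$.} The operators $\T_{(n}(\mathbf T)$ increase in $n$ and are bounded above by $I$, so they converge strongly to some positive operator $S\le I$. For $h\in\mathcal F$, the right-hand side of the bound in Step 1 is the tail of the convergent series \eqref{eq:vector-summability}. Hence $\langle (I-S)h,h\rangle=0$, that is, $\|(I-S)^{1/2}h\|=0$. Since $\mathcal F$ is dense and $(I-S)^{1/2}$ is bounded, $(I-S)^{1/2}=0$, so $S=I$. This establishes (1), and Theorem~\ref{CharacterizationBeurlingtype} yields the minimal isometric dilation $\textbf{M}_z$ on $H^2_\E(\D_2^{(\infty)})$.

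\emph{Step 3: the particular case.} If $\sum_i\|T_i\|^2<\infty$, take $\mathcal F=\mathcal H$, since $\sum_i\|T_i^*h\|^2\le \|h\|^2\sum_i\|T_i\|^2$.

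The only genuinely new ingredient is the operator inequality in Step 1. I expect this to be the main point to get right, in particular making sure the sign in \eqref{monotonicity} is used correctly. The rest follows from monotone convergence together with the earlier theorem.
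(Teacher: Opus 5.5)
Your proposal is correct and follows the paper's proof: you verify condition (1) of Theorem~\ref{CharacterizationBeurlingtype} through the estimate $I-\Delta_F(\mathbf T)\le\sum_{i\in F}T_iT_i^*$, which you prove by induction where the paper uses a telescoping identity, and then apply it to tails of the convergent series. The one small difference is in the last step. You first take the monotone SOT limit $S$ and show $(I-S)^{1/2}$ vanishes on the dense set $\mathcal F$. The paper instead obtains WOT convergence from density and uniform boundedness, then upgrades to SOT using $\bigl(I-\T_{(n}(\mathbf T)\bigr)^2\le I-\T_{(n}(\mathbf T)$. Both arguments are valid.
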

\begin{proof}
We show that if $\mathbf T$ satisfies the hypothesis, then it also satisfies condition (1) in Theorem~\ref{CharacterizationBeurlingtype}. To this end, first observe that, for any finite subset $F=\{j_1,j_2,\dots,j_r\}\subset\mathbb N$,
\[
I-\Delta_F(\mathbf T)
=\sum_{k=1}^r T_{j_k}\Delta_{F_{k-1}}(\mathbf T)T_{j_k}^*,
\]
where $F_k=\{j_1,j_2,\dots,j_k\}$ and $F_0=\emptyset$. Since $0\leq \Delta_G(\mathbf T)\leq I$ for every finite set $G$, it follows that
\[
I-\Delta_F(\mathbf T)
\leq \sum_{j\in F}T_jT_j^*.
\]
In particular,
\[
I-\Delta_{\{n+1,\dots,m\}}(\mathbf T)
\leq \sum_{n+1\leq j\leq m}T_jT_j^*.
\]
Consequently, for any $h\in \mathcal F,$
\[
\left\langle I-\T_{(n}(\mathbf T)h,h\right\rangle
\leq \sum_{j\geq n+1}\|T_j^*h\|^2.
\]
By the convergence hypothesis, it follows that
$$
\left\langle
\bigl(I-\T_{(n}(\mathbf T)\bigr)h,h
\right\rangle\longrightarrow 0
\qquad\text{as }n\to\infty,
$$
for every $h\in\mathcal F$. Since
$
0\leq I-\T_{(n}(\mathbf T)\leq I
$
for all $n$, the uniform boundedness of the operators, together with the density of $\mathcal F$ in $\mathcal H$, implies that
$$
I-\T_{(n}(\mathbf T)\longrightarrow 0
$$
in the weak operator topology. Finally, since
$
\bigl(I-\T_{(n}(\mathbf T)\bigr)^2
\leq I-\T_{(n}(\mathbf T),
$
we conclude that
$$
\sotlim_{n\to\infty} (I-\T_{(n}(\mathbf T))=0.
$$
This completes the proof.

\end{proof}
We work out an example to demonstrate the applicability of the results obtained in this section. 

\begin{example}\label{ex:nilpotent}
On $\mathcal H=\C^2$, with orthonormal basis $e_1,e_2$, consider
\[
 N=\begin{pmatrix}0&1\\0&0\end{pmatrix},\qquad
 a_i=\frac{\sqrt3}{2^{i}},\qquad T_i=a_iN\quad(i\ge1).
\]
Every $T_i$ is nonzero, pure, and strictly contractive. Since $T_iT_j=0$, $\mathbf T$ is commutative. The geometric series
gives $\sum_{i\ge1}a_i^2=1$, we have $\sum_i \|T_i\|^2<\infty$. 
Observe that, as $T_iT_j=0$, only subsets of size zero and one contribute to a finite Brehmer defect,
so
\[
 \Delta_F(\mathbf T)=I-\left(\sum_{i\in F}a_i^2\right)NN^*
 =\diag\left(1-\sum_{i\in F}a_i^2,1\right)>0.
\]
Thus $\mathbf T$ satisfies Brehmer's positivity. Therefore, by Proposition~\ref{thm:summability}, $\mathbf T$ has its minimal isometric dilation on the Hardy space. We compute the dilation map explicitly below.  Note that this sequence is not doubly commuting:
for $i\ne j$,
\[
 T_iT_j^*=a_i a_j\diag(1,0),\qquad
 T_j^*T_i=a_i a_j\diag(0,1).
\]
The defect operator and the tail defect operator are
\begin{equation}\label{eq:nilpotent-defects}
 D_{\mathbf T^*}=\diag(0,1),\qquad
 \T_{(n}(\mathbf T)=\diag(1-4^{-n},1)\quad(n\ge0),
\end{equation}
as $\sum_{i>n}a_i^2=4^{-n}$.
For $x=x_1e_1+x_2e_2\in \mathbb C^2$, we have
\[
 D_{\mathbf T^*} x=x_2e_2,\qquad
 D_{\mathbf T^*} T_i^*x=a_i x_1e_2,
 \qquad D_{\mathbf T^*} \mathbf T^{*\alpha}x=0\quad(|\alpha|\ge2).
\]
Consequently, one sees that 
\[
\sum_{\alpha \in \Z_+^{(\infty)}}\|D_{\textbf{T}^*}{\textbf{T}^*}^\alpha x\|^2= |x_2|^2+|x_1|^2(\sum_i a_i^2)=\|x\|^2.
\]
Identifying the defect space $\C e_2$ with $\C$, we define
\[
 g(z)=\sum_{i=1}^\infty a_i z_i.
\]
This function has Hardy norm one and is orthogonal to $1$. The map $\Pi_{\mathbf T}: \mathbb C^2\to H^2_{\mathbb C}(\D_2^{(\infty)})$ defined by 
\[
 \Pi_{\mathbf T}(x_1e_1+x_2e_2)=x_2+x_1g,\]
 is an isometry with $
 \operatorname{Ran}\Pi_{\mathbf T}=\mathcal Q=\operatorname{span}\{1,g\}\subset H^2_{\mathbb C}(\D_2^{(\infty)})$.
The identities $M_{z_i}^*g=a_i$ and $M_{z_i}^*1=0$ directly check the intertwining property $M_{z_i}^*\Pi_{\mathbf T}= \Pi_{\mathbf T} T_i^*$. Thus $\mathbf T$ has $\textbf{M}_z$ on $H^2_{\mathbb C}(\D_2^{(\infty)})$ as its minimal isometric dilation, that is, $\mathbf T \cong P_{\mathcal Q}\textbf{M}_z|_{\mathcal Q}$. The minimality follows from the fact that $1\in \mathcal Q$.
\end{example}

\section{Beurling quotient modules and model spaces }

In Theorem ~\ref{CharacterizationBeurlingtype}, we established a necessary and sufficient condition for a sequence $\textbf{T}$ of commuting contractions to be unitarily equivalent to $(P_{\mathcal Q}M_{z_i}|_{\mathcal Q})_{i\geq 1}$ for some quotient module $\mathcal Q$ in $H^2_\E(\D_2^{(\infty)}).$ In such a case, the quotient module $\mathcal Q$ is known as the model space for $\textbf T$. In this section, we characterize all such sequences $\textbf T$ for which the model spaces are Beurling quotient modules (Theorem \ref{CharacterizationdilationBeurlingquotientmodule}).  
Our characterization depends on a complete characterization of Beurling quotient modules of $H^2_\E(\D_2^{(\infty)})$, which is provided below. Let $\Omega$ denote either $\D_2^{(\infty)}$ or $\D^n$ for some $n\in \N.$ 

\begin{theorem}\label{compressionBeurlingquotientmodule}
Let $\mathcal Q$ be a quotient module of $H^2_\E(\Omega).$ Define $C_{j}:=P_\mathcal Q M_{z_j}|_{\mathcal Q}$ for all $j,$ where $P_\mathcal Q$ denotes the orthogonal projection of $H^2_\E(\Omega)$ onto $\mathcal Q.$  Then the following are equivalent.
\begin{enumerate}
\item[(1)] $\mathcal Q$ is a Beurling quotient module.

 \item[(2)] For all $i\neq j$, $$(I_\mathcal Q-C_{i}^*C_{i})(I_\mathcal Q-C_{j}^*C_{j})=0. $$ 

\item[(3)] For all $i\neq j$, $$(M_{z_j}|_{\mathcal Q^\perp})^*(M_{z_i}|_{\mathcal Q^\perp})=(M_{z_i}|_{\mathcal Q^\perp})(M_{z_j}|_{\mathcal Q^\perp})^*.$$
\end{enumerate}
\end{theorem}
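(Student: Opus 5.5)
The plan is to prove $(2)\iff(3)$ by a direct operator computation, $(1)\implies(3)$ through the isometry $M_\Phi$, and $(3)\implies(1)$ using the Beurling-type criterion of Theorem~\ref{CharacterizationBeurlingtype}. Throughout, write $\S:=\mathcal Q^\perp$, $M_i:=M_{z_i}$ and $V_i:=M_i|_{\S}$, and set
\[
X_i:=P_{\S}M_iP_{\mathcal Q}, \quad\text{regarded as an operator } \mathcal Q\to\S .
\]
The first step is to express both conditions (2) and (3) through the $X_i$. Since $\mathcal Q$ is $M_i^*$-invariant, we have $P_{\mathcal Q}M_i^*P_{\mathcal Q}=M_i^*P_{\mathcal Q}$. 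Hence
\[
I_{\mathcal Q}-C_i^*C_i=P_{\mathcal Q}M_i^*(I-P_{\mathcal Q})M_i|_{\mathcal Q}=X_i^*X_i .
\]
Since $\S$ is $M_i$-invariant, $M_iP_{\S}=P_{\S}M_iP_{\S}$. Using also $M_j^*M_i=M_iM_j^*$ on $H^2_\E(\Omega)$ for $i\neq j$, we get on $\S$:
\[
V_j^*V_i-V_iV_j^*=P_{\S}M_iM_j^*P_{\S}-P_{\S}M_iP_{\S}M_j^*P_{\S}=P_{\S}M_iP_{\mathcal Q}M_j^*P_{\S}=X_iX_j^* .
\]
So (3) reads $X_iX_j^*=0$ for all $i\ne j$, and (2) reads $X_i^*X_iX_j^*X_j=0$ for all $i\ne j$.

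\emph{Proof of $(2)\iff(3)$.} The implication $(3)\implies(2)$ is immediate. For the converse, suppose (2) holds and fix $h$. Then
\[
\|X_iX_j^*X_jh\|^2=\langle X_i^*X_iX_j^*X_jh,\,X_j^*X_jh\rangle=0,
\]
so $X_iX_j^*$ vanishes on $\overline{\operatorname{Ran}}\,X_j$. It also vanishes on $(\operatorname{Ran}X_j)^\perp=\ker X_j^*$. Therefore $X_iX_j^*=0$, which is (3).

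\emph{Proof of $(1)\implies(3)$.} If $\S=\Phi H^2_{\E_*}(\Omega)$ with $\Phi$ inner, then $M_\Phi$ is an isometry with range $\S$ and $M_\Phi M_{z_i}=M_{z_i}M_\Phi$. Thus $\mathbf V=(V_i)$ is unitarily equivalent to $\textbf M_z$ on $H^2_{\E_*}(\Omega)$, which is doubly commuting. This gives (3).

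\emph{Proof of $(3)\implies(1)$.} This is the main step. Under (3), $\mathbf V$ is a sequence of doubly commuting pure isometries on $\S$. The aim is to show it is of Beurling's type. Then there is a unitary $U:H^2_{\mathcal W}(\Omega)\to\S$, where $\mathcal W=\mathcal D_{\mathbf V^*}=\S\ominus\overline{\sum_i z_i\S}$, defined by $U(wz^\alpha)=\mathbf V^\alpha w$. The map $\iota\circ U$, with $\iota:\S\hookrightarrow H^2_\E(\Omega)$ the inclusion, is an isometry commuting with each $M_{z_i}$. Setting $\Phi(z)w:=(Uw)(z)$ should identify it as $M_\Phi$ for an inner $\Phi\in H^\infty_{\B(\mathcal W,\E)}(\Omega)$; this is the standard fact that operators intertwining the shifts are multipliers, checked on monomials $wz^\alpha$ together with boundedness. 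That yields $\S=\Phi H^2_{\mathcal W}(\Omega)$.

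To get Beurling's type, I would verify condition (1) of Theorem~\ref{CharacterizationBeurlingtype} for $\mathbf V$. Its minimal isometric dilation is itself, so by that theorem it is then unitarily equivalent to $\textbf M_z$ on some $H^2$; for $\Omega=\D^n$ Beurling's type is automatic for finitely many doubly commuting pure isometries. Brehmer positivity and purity hold, so it remains to show the tail defects tend to $I_{\S}$. For doubly commuting isometries, $\Delta_G(\mathbf V)=\prod_{i\in G}(I-V_iV_i^*)$. Hence $I_{\S}-\Delta_G(\mathbf V)$ is the projection onto $\overline{\sum_{i\in G}z_i\S}$, and this subspace lies inside $\overline{\sum_{i\in G}z_iH^2_\E}$. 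The latter has projection $I-\Delta_G(\textbf M_z)$, so
\[
I_{\S}-\Delta_G(\mathbf V)\le P_{\S}\bigl(I-\Delta_G(\textbf M_z)\bigr)|_{\S}.
\]
Taking $G=\{n+1,\dots,m\}$ and letting $m\to\infty$ gives
\[
0\le I_{\S}-\T_{(n}(\mathbf V)\le P_{\S}\bigl(I-\T_{(n}(\textbf M_z)\bigr)|_{\S}.
\]
Since $\textbf M_z$ is of Beurling's type, Theorem~\ref{CharacterizationBeurlingtype} gives $\T_{(n}(\textbf M_z)\to I$ strongly. Hence $\T_{(n}(\mathbf V)\to I_{\S}$ weakly, and since $0\le I_{\S}-\T_{(n}(\mathbf V)\le I$, also strongly, by the same squaring argument as in Proposition~\ref{thm:summability}.

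I expect the main obstacle to be this last step: establishing Beurling's type in the infinite-variable case, where the defect space could a priori be trivial. The projection comparison above is the device I would rely on to rule this out.
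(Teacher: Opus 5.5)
Your proof is correct, and it takes a different, more self-contained route than the paper. The paper does not prove $(1)\iff(3)$ at all; it cites \cite{Mandrekar}, \cite[Theorem 1.7]{Dan-Guo}, \cite{Sarkar-Wick} and \cite{Olofsson}. For $(2)\iff(3)$ on $\D_2^{(\infty)}$ it takes $N=\max\{i,j\}$ and transports $\mathcal Q$ by the unitary $W_N\colon H^2_\E(\D_2^{(\infty)})\to H^2_{H^2_\E(\D_2^{(\infty)})}(\D^N)$, which intertwines $M_{z_1},\dots,M_{z_N}$. It then imports the finite-polydisc equivalence from \cite[Theorem 1.1]{BKD} and \cite[Theorem 1.3]{Sarkar-Wick}.

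Your identities $I_{\mathcal Q}-C_i^*C_i=X_i^*X_i$ and $V_j^*V_i-V_iV_j^*=X_iX_j^*$ reduce $(2)\iff(3)$ to the elementary fact that $X_i^*X_iX_j^*X_j=0$ if and only if $X_iX_j^*=0$. This avoids the reduction to finitely many variables and treats $\D^n$ and $\D_2^{(\infty)}$ uniformly. It also proves the equivalence for co-invariant subspaces of any doubly commuting sequence of isometries, since you only use $M_i^*M_i=I$, $M_j^*M_i=M_iM_j^*$ and the invariance of $\S$.

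For $(3)\implies(1)$, instead of citing \cite[Theorem 1.7]{Dan-Guo}, you derive Beurling's type of $\mathbf V$ from the paper's own Section~\ref{Section:2}. The inclusion $\overline{\sum_{i\in G}z_i\S}\subseteq\overline{\sum_{i\in G}z_iH^2_\E}$ yields $0\le I_{\S}-\T_{(n}(\mathbf V)\le P_{\S}(I-\T_{(n}(\textbf M_z))|_{\S}$. Moreover, $\T_{(n}(\textbf M_z)$ is the projection onto functions of $z_1,\dots,z_n$ alone, which increases strongly to $I$. This comparison is exactly what rules out the infinite-variable pathology of a defect space that is trivial or fails to generate, and it is sound.

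You can even bypass Proposition~\ref{Beurlingtype*regulardilation}, and with it \cite[Lemmas 2.10, 2.11]{Dan-Guo}. Lemma~\ref{tail defect} gives $I_{\S}=\sum_\alpha\mathbf V^\alpha D_{\mathbf V^*}^2\mathbf V^{*\alpha}$, and for doubly commuting isometries each summand is the projection onto $\mathbf V^\alpha\mathcal D_{\mathbf V^*}$.

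The final identification of $\iota\circ U$ with some $M_\Phi$ is the standard fact that intertwiners of the shifts are multipliers. On $\D_2^{(\infty)}$ this uses boundedness of point evaluation at points of $\ell^2$, which you should state explicitly rather than leave as "should identify".

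The paper's route buys brevity by outsourcing both implications. Yours makes the theorem essentially a consequence of the paper's own dilation results plus elementary operator identities.
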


\begin{proof}The equivalence between $(1)$ and $(3)$ is well-established in the literature; refer to \cite{Mandrekar}, \cite[Theorem 1.7]{Dan-Guo}, \cite[Theorem 1.3]{Sarkar-Wick}, and \cite[Theorem 5.1]{Olofsson}. For the case where $\Omega=\D^n$ for some $n\in \N$, the equivalence between $(2)$ and $(3)$ is proved in \cite{BKD}. We establish the equivalence between $(2)$ and $(3)$ for the case where $\Omega=\D_2^{(\infty)}.$ This is done using the result for $\D^n$ and identifying $H^2_\E(\D_2^{(\infty)})$ as a vector-valued Hardy space over the polydisc. 

Let $i\neq j.$ Let $N=max\{i,j\}.$ Let $$W_N:H^2_\E(\D_2^{(\infty)})\rightarrow H^2_{H^2_\E(\D_2^{(\infty)})}(\D^N)$$ be the unitary given by 
\[W_N(az_1^{\alpha_1}z_2^{\alpha_2}\cdots z_n^{\alpha_n})=\begin{cases}
az_1^{\alpha_1}z_2^{\alpha_2}\cdots z_n^{\alpha_n} &\text{if } n\leq N,\\
(az_1^{\alpha_{N+1}}z_2^{\alpha_{N+2}}\cdots z_{n-N}^{\alpha_n})z_1^{\alpha_1}z_2^{\alpha_2}\cdots z_N^{\alpha_N} & \text{if }  n>N
\end{cases}\]
for $a\in \E$ and $\alpha_m \in \Z_+.$
It is easy to check that $$W_NM_{z_k}W_N^*=M_{z_k}$$ for $k\in\{1,2,...,N\}.$ Here,
$M_{z_k}$ on the left side of the identity denotes multiplication by ${z_k}$ on $H^2_\E(\D_2^{(\infty)}),$ while $M_{z_k}$ on the right side denotes multiplication by ${z_k}$ on $H^2_{H^2_\E(\D_2^{(\infty)})}(\D^N).$ Note that $W_N(\mathcal Q)$ is a quotient module in $H^2_{H^2_\E(\D_2^{(\infty)})}(\D^N).$ Define 
$$D_{k}:=P_{W_N(\mathcal Q)}M_{z_k}|_{W_N(\mathcal Q)}$$ for  $k\in\{1,2,...,N\}.$ Let $W_N^{'}:\mathcal Q\to W_N(\mathcal Q)$ be the unitary given by $W_N^{'}=W_N|_\mathcal Q.$ Then for all $k=1,2,...,N$,
\begin{equation}\label{cdequality}
W_N^{'}C_{k}{W_N^{'}}^*=D_{k}, \text{ and }
(I_{W_N(\mathcal Q)}-D_{k}^*D_{k})=W_N^{'}(I_\mathcal Q-C_{k}^*C_{k}){W_N^{'}}^*.
\end{equation}
Also, if $W_N^{''}:\mathcal Q^\perp\to {W_N(\mathcal Q)}^\perp$ is the unitary given by $W_N^{''}=W_N|_{{\mathcal Q}^\perp}$, then 
\begin{equation}\label{uc}
W_N^{''}M_{z_k}|_{{\mathcal Q}^\perp}{W_N^{''}}^*=M_{z_k}|_{{W_N(\mathcal Q)}^\perp}
\end{equation} for all $k=1,2,...,N.$ 

Now, suppose $(2)$ is true. Then, by \cref{cdequality}, for any $k,s \in \{1,2,...,N\}$ with $k\neq s,$ we have $$(I_{W_N(\mathcal Q)}-D_{k}^*D_{k})(I_{W_N(\mathcal Q)}-D_{s}^*D_{s})=0.$$ Hence by \cite[Theorem 1.1]{BKD} and \cite[Theorem 1.3]{Sarkar-Wick}, \begin{equation*}
(M_{z_j}|_{{W_N(\mathcal Q)}^\perp})^*(M_{z_i}|_{{W_N(\mathcal Q)}^\perp})=(M_{z_i}|_{{W_N(\mathcal Q)}^\perp})(M_{z_j}|_{{W_N(\mathcal Q)}^\perp})^*.    
\end{equation*} Therefore, by \cref{uc}, we have $$(M_{z_j}|_{\mathcal Q^\perp})^*(M_{z_i}|_{\mathcal Q^\perp})=(M_{z_i}|_{\mathcal Q^\perp})(M_{z_j}|_{\mathcal Q^\perp})^*.$$ This proves $(3).$

Conversely, suppose $(3)$ is true. Then, by \cref{uc}, $$(M_{z_s}|_{{W_N(\mathcal Q)}^\perp})^*(M_{z_k}|_{{W_N(\mathcal Q)}^\perp})=(M_{z_k}|_{{W_N(\mathcal Q)}^\perp})(M_{z_s}|_{{W_N(\mathcal Q)}^\perp})^*$$ for all $k,s \in \{1,2,...,N\}$ with $k\neq s.$ Therefore, by \cite[Theorem 1.3]{Sarkar-Wick} and \cite[Theorem 1.1] {BKD}, $$(I_{W_N(\mathcal Q)}-D_{i}^*D_{i})(I_{W_N(\mathcal Q)}-D_{j}^*D_{j})=0.$$ Hence, by \cref{cdequality}, $$(I_\mathcal Q-C_{i}^*C_{i})(I_\mathcal Q-C_{j}^*C_{j})=0.$$ This completes the proof.
\end{proof}

As a particular case of the characterization of Beurling quotient modules above, let us consider a quotient module $\mathcal Q$ of $H^2_{\mathcal E}(\mathbb D_2^{(\infty)})$ which is of Beurling type and satisfies

$$
\operatorname{Ker}(I_{\mathcal Q}-C_1^*C_1)=\{0\}.
$$

It then follows from item $(2)$ in Theorem~\ref{compressionBeurlingquotientmodule} that
$$
I_{\mathcal Q}=C_j^*C_j
$$
for all $j\neq 1$. Thus each $C_j$ is an isometry for $j\geq 2$. Since $\mathcal Q$ is a quotient module, the norm-preserving property of $C_j$ is equivalent to $\mathcal Q$ being invariant under $M_{z_j}$ for every $j\geq 2$. Consequently, $\mathcal Q$ is a joint-reducing subspace for $(M_{z_2},M_{z_3},\dots)$ on $H^2_{H^2(z_1)\otimes\mathcal E }(z_2,z_3,\dots)$. Therefore, by \cite[Lemmas 2.11 and 2.10]{Dan-Guo},
\begin{equation}\label{onevariable factor}
\mathcal Q=\mathcal Q_0\otimes H^2_{\mathbb C}(z_2,z_3,\dots),
\end{equation}
where $\mathcal Q_0$ is a co-invariant subspace of $H^2_{\mathcal E}(\mathbb D)$, and $H^2_{\mathbb C}(z_2,z_3,\dots)$ denotes the scalar-valued Hardy space consisting of functions that are independent of the $z_1$ variable.

On the other hand, by the classical Beurling--Lax--Halmos theorem, a quotient module of the form~\eqref{onevariable factor} is a Beurling quotient module. More precisely, there exist a Hilbert space $\mathcal E'$ and an inner multiplier
$$
\Theta:\mathbb D\to\mathcal B(\mathcal E';\mathcal E),
$$
depending only on the variable $z_1$, such that
$$
\mathcal Q^\perp
=\Theta H^2_{\mathcal E'}(\mathbb D_2^{(\infty)}),
$$
where $\Theta$ is determined by the relation
$\mathcal Q_0^\perp=\Theta H^2_{\mathcal E'}(\mathbb D).
$
Thus we obtain the following one-variable rigidity result.

\begin{proposition}
Let $\mathcal Q$ be a quotient module of $H^2_{\mathcal E}(\mathbb D_2^{(\infty)})$ such that
$$
\operatorname{Ker}(I_{\mathcal Q}-C_1^*C_1)=\{0\},
$$
where, for each $i\geq 1$,
$$
C_i:=P_{\mathcal Q}M_{z_i}|_{\mathcal Q},
$$
and $P_{\mathcal Q}$ denotes the orthogonal projection of $H^2_{\mathcal E}(\mathbb D_2^{(\infty)})$ onto $\mathcal Q$. Then the following are equivalent:
\begin{enumerate}
\item[(1)] $\mathcal Q$ is a Beurling quotient module.

\item[(2)] $C_j$ is an isometry for all $j\geq 2$.

\item[(3)] There exists a quotient module $\mathcal Q_0$ of $H^2_{\mathcal E}(\mathbb D)$ such that

$$
\mathcal Q
=
\mathcal Q_0\otimes
H^2_{\mathbb C}(z_2,z_3,\dots).
$$

\item[(4)] There exist a Hilbert space $\mathcal E'$ and a one-variable inner multiplier
$$
\Theta:\mathbb D\to\mathcal B(\mathcal E';\mathcal E)
$$
such that
$$
\mathcal Q^\perp
=
\Theta H^2_{\mathcal E'}(\mathbb D_2^{(\infty)}).
$$
\end{enumerate}
\end{proposition}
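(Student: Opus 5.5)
We will prove the cycle $(1)\Rightarrow(2)\Rightarrow(3)\Rightarrow(4)\Rightarrow(1)$, following the discussion just before the statement.

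For $(1)\Rightarrow(2)$ we use Theorem~\ref{compressionBeurlingquotientmodule}. If $\mathcal Q$ is Beurling, item (2) of that theorem gives $(I_{\mathcal Q}-C_1^*C_1)(I_{\mathcal Q}-C_j^*C_j)=0$ for every $j\ge 2$. Hence the range of $I_{\mathcal Q}-C_j^*C_j$ lies in $\operatorname{Ker}(I_{\mathcal Q}-C_1^*C_1)=\{0\}$. So $C_j^*C_j=I_{\mathcal Q}$, and $C_j$ is an isometry.

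For $(2)\Rightarrow(3)$ we first show that $\mathcal Q$ is invariant under $M_{z_j}$ for $j\ge2$. For $f\in\mathcal Q$,
\[
\|f\|=\|C_jf\|=\|P_{\mathcal Q}M_{z_j}f\|\le\|M_{z_j}f\|=\|f\|.
\]
Equality forces $M_{z_j}f\in\mathcal Q$. Together with $M_{z_j}^*$-invariance, $\mathcal Q$ reduces each $M_{z_j}$, $j\ge2$.

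Next we identify $H^2_{\mathcal E}(\D_2^{(\infty)})$ with $H^2_{\mathcal E_1}(z_2,z_3,\dots)$, where $\mathcal E_1=H^2_{\mathcal E}(\D)$ is the Hardy space in the variable $z_1$. This is done by a unitary of the type $W_N$ used in the proof of Theorem~\ref{compressionBeurlingquotientmodule}, now splitting off the single variable $z_1$. Under this identification the shifted sequence $(M_{z_2},M_{z_3},\dots)$ is the coordinate shift on a Hardy space in infinitely many variables. By \cite[Lemmas 2.11 and 2.10]{Dan-Guo}, its reducing subspaces are of the form $H^2_{\mathcal Q_0}(z_2,z_3,\dots)$ for a closed subspace $\mathcal Q_0\subseteq\mathcal E_1$. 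This gives $\mathcal Q=\mathcal Q_0\otimes H^2_{\C}(z_2,z_3,\dots)$.

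It remains to see that $\mathcal Q_0$ is a quotient module of $H^2_{\mathcal E}(\D)$, i.e.\ invariant under $M_{z_1}^*$. We get this by applying $M_{z_1}^*$, which acts only on the $\mathcal E_1$ factor, to elements $q\otimes 1\in\mathcal Q$ with $q\in\mathcal Q_0$. The main point to check carefully is exactly this identification: that \cite[Lemmas 2.11 and 2.10]{Dan-Guo} apply with the coefficient space $\mathcal E_1$, which is infinite dimensional and where only the variables $z_2,z_3,\dots$ are involved. These lemmas are stated for arbitrary Hilbert coefficient spaces, so relabelling the variables should suffice.

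For $(3)\Rightarrow(4)$ we apply the Beurling--Lax--Halmos theorem to get $\mathcal Q_0^\perp=\Theta H^2_{\mathcal E'}(\D)$ with $\Theta$ inner. Then
\[
\mathcal Q^\perp=\mathcal Q_0^\perp\otimes H^2_{\C}(z_2,\dots)=\Theta H^2_{\mathcal E'}(\D_2^{(\infty)}).
\]
Multiplication by $\Theta$, which depends only on $z_1$, is an isometry on $H^2_{\mathcal E'}(\D_2^{(\infty)})$. This holds because it equals $M_\Theta\otimes I$ under the tensor identification.

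Finally, $(4)\Rightarrow(1)$ is immediate from the definition of a Beurling quotient module.
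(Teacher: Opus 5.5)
Your proposal is correct and follows the same route as the paper. The steps match: $(1)\Rightarrow(2)$ comes from item (2) of Theorem~\ref{compressionBeurlingquotientmodule} together with the kernel hypothesis; $(2)\Rightarrow(3)$ turns norm preservation of $C_j$ into $M_{z_j}$-invariance and applies the Dan--Guo reducing-subspace lemmas in the variables $z_2,z_3,\dots$ with coefficient space $H^2_{\mathcal E}(\D)$; and $(3)\Rightarrow(4)\Rightarrow(1)$ goes via Beurling--Lax--Halmos. You also spell out two points the paper leaves implicit: that $\mathcal Q_0$ is $M_{z_1}^*$-invariant, and that multiplication by $\Theta(z_1)$ is isometric on $H^2_{\mathcal E'}(\D_2^{(\infty)})$.
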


We remark that the equivalence of $(2)$ and $(3)$ in the above proposition holds even without the hypothesis
$
\operatorname{Ker}(I_{\mathcal Q}-C_1^*C_1)=\{0\}.
$
We now present the main result of this section, which follows from Theorems \ref{CharacterizationBeurlingtype} and \ref{compressionBeurlingquotientmodule}.

\begin{theorem}\label{CharacterizationdilationBeurlingquotientmodule}
Let $\textbf{T}=(T_n)_{n\geq 1}$ be a sequence of commuting contractions on $\H.$ Then the following statements are equivalent.
\begin{enumerate}
\item[(1)] The sequence $\textbf{T}$ is unitarily equivalent to a sequence $(P_{\mathcal Q}M_{z_i}|_{\mathcal Q})_{i\geq 1}$ for some Beurling quotient module $\mathcal Q$ of $H^2_\E(\D_2^{(\infty)})$ with coefficient Hilbert space $\E$.

\item[(2)] $\textbf{T}$ satisfies Brehmer's positivity, the identity $$\|x\|^2=\sum_{\alpha \in \Z_+^{(\infty)}}\|D_{\textbf{T}^*}{\textbf{T}^*}^\alpha x\|^2$$ holds for all $x\in \H,$ and for all $i\neq j$, $$(I_\H-T_i^*T_i)(I_\H-T_j^*T_j)=0.$$ 
\end{enumerate}
\end{theorem}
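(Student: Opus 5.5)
The proof combines Theorem~\ref{CharacterizationBeurlingtype} with Theorem~\ref{compressionBeurlingquotientmodule}. The only extra ingredient is that the operator identity in condition (2) of Theorem~\ref{compressionBeurlingquotientmodule} is invariant under unitary equivalence.

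For $(1)\implies(2)$, suppose $U:\H\to\mathcal Q$ is a unitary with $UT_iU^*=C_i:=P_{\mathcal Q}M_{z_i}|_{\mathcal Q}$ for all $i$, where $\mathcal Q$ is a Beurling quotient module of $H^2_\E(\D_2^{(\infty)})$. Then $\mathbf T$ is unitarily equivalent to $(P_{\mathcal Q}M_{z_i}|_{\mathcal Q})_{i\geq 1}$ for a quotient module $\mathcal Q$. By the equivalence $(4)\iff(2)$ of Theorem~\ref{CharacterizationBeurlingtype}, $\mathbf T$ satisfies Brehmer's positivity and the norm identity $\|x\|^2=\sum_\alpha\|D_{\mathbf T^*}\mathbf T^{*\alpha}x\|^2$.

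Since $\mathcal Q$ is Beurling, Theorem~\ref{compressionBeurlingquotientmodule} gives $(I_{\mathcal Q}-C_i^*C_i)(I_{\mathcal Q}-C_j^*C_j)=0$ for $i\neq j$. We have $U^*(I_{\mathcal Q}-C_k^*C_k)U=I_\H-T_k^*T_k$, so conjugating by $U$ yields $(I_\H-T_i^*T_i)(I_\H-T_j^*T_j)=0$.

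For $(2)\implies(1)$, the first two conditions are exactly condition (2) of Theorem~\ref{CharacterizationBeurlingtype}. So there exist a Hilbert space $\E$, a quotient module $\mathcal Q\subseteq H^2_\E(\D_2^{(\infty)})$ and a unitary $U:\H\to\mathcal Q$ with $UT_iU^*=P_{\mathcal Q}M_{z_i}|_{\mathcal Q}=C_i$. Concretely, one can take $U=\Pi$ from the proof of Proposition~\ref{Beurlingtype*regulardilation}, $\mathcal Q=\Pi(\H)$ and $\E=\mathcal D_{\mathbf T^*}$. Here $\Pi(\H)$ is $\mathbf M_z^*$-invariant by \eqref{invariance}, and the compression identity holds because $\Pi T_i^*=M_{z_i}^*\Pi$, which gives $C_i^*=\Pi T_i^*\Pi^*|_{\mathcal Q}$.

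Transporting the hypothesis $(I_\H-T_i^*T_i)(I_\H-T_j^*T_j)=0$ through $U$ gives $(I_{\mathcal Q}-C_i^*C_i)(I_{\mathcal Q}-C_j^*C_j)=0$ for all $i\neq j$. By Theorem~\ref{compressionBeurlingquotientmodule} with $\Omega=\D_2^{(\infty)}$, $\mathcal Q$ is then a Beurling quotient module, which proves (1).

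There is no real obstacle. The only point needing care is that the unitary equivalence provided by Theorem~\ref{CharacterizationBeurlingtype} is an equivalence with the compressions $C_i$ themselves, and not merely a dilation statement. This is what lets the identities $I-T_k^*T_k\leftrightarrow I-C_k^*C_k$ be transported exactly. The point is secured by the "equivalently" clause of item (4) there, or directly by the intertwining \eqref{invariance}.
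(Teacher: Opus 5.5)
Your proposal is correct and follows the paper's route, which simply combines the $(2)\Leftrightarrow(4)$ equivalence of Theorem~\ref{CharacterizationBeurlingtype} with the $(1)\Leftrightarrow(2)$ equivalence of Theorem~\ref{compressionBeurlingquotientmodule} for $\Omega=\D_2^{(\infty)}$. Your proof differs only in spelling out that the identity $(I-C_i^*C_i)(I-C_j^*C_j)=0$ transports under the unitary equivalence; your optional concrete choice $U=\Pi$, $\mathcal Q=\Pi(\H)$ is also valid.
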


In the remainder of this section, we prove that the class of doubly commuting submodules and the class of Brehmer submodules of $H^2_\E(\D_2^{(\infty)})$ are the same using an elementary fact observed in \cite{Yang}. This extends the result of \cite[Theorem 7.6]{Rani}, which states that a submodule of $H^2_\E(\D^n)$  is a Brehmer submodule if and only if it is a doubly commuting submodule. Recall that a submodule $\mathcal S$ of  $H^2_\E(\D_2^{(\infty)})$ is a Brehmer submodule if the sequence of isometries $\textbf{M}_z|_{\mathcal S}=(M_{z_1}|_{\mathcal S},M_{z_2}|_{\mathcal S},...)$ satisfies Brehmer's positivity.

\begin{theorem}\label{Brehmer-DC}
Let $\S$ be a submodule of $H^2_\E(\D_2^{(\infty)}).$ Then $\S$ is a Brehmer submodule if and only if it is a doubly commuting submodule.
\end{theorem}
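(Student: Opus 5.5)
The easy direction is immediate. A sequence of doubly commuting contractions satisfies Brehmer's positivity, as noted in the introduction. Hence a doubly commuting submodule $\S$, for which $\textbf{M}_z|_{\S}$ is doubly commuting, is a Brehmer submodule.

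For the converse, suppose $\textbf{V}:=\textbf{M}_z|_{\S}$ satisfies Brehmer's positivity. The plan is to use the elementary observation from \cite{Yang}: if $V$ is an isometry and $A\ge 0$ with $V A V^*\le A$, then information about $A$ passes to $V^*AV$. More concretely, I will work with two indices $i\neq j$ and the finite set $F=\{i,j\}$. Brehmer's positivity for $F$ reads
\[
\Delta_{\{i,j\}}(\textbf V)=I-V_iV_i^*-V_jV_j^*+V_iV_jV_j^*V_i^*\ge 0 .
\]
Set $P_k:=V_kV_k^*$; these are projections since the $V_k$ are isometries. Write $Q:=V_iP_jV_i^*$. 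This is a projection, because $V_iP_jV_i^*V_iP_jV_i^*=V_iP_jV_i^*$, and it is dominated by $P_i$. Its range is $V_iV_j\S$, which lies in $V_j\S$, so $Q\le P_j$ as well. The positivity condition becomes $I-P_i-P_j+Q\ge 0$.

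The key step is to deduce from this that $P_iP_j=Q$, i.e.\ the range projections commute and $P_i\wedge P_j=Q$. Compressing the inequality to $\operatorname{Ran}(P_i-Q)$ kills $Q$, since $Q\le P_i$ gives $Q(P_i-Q)=0$. With $x\in\operatorname{Ran}(P_i-Q)$, so that $P_ix=x$ and $Qx=0$, we get $\langle (I-P_i-P_j)x,x\rangle=-\|P_jx\|^2\ge 0$, hence $P_jx=0$. Therefore $P_j(P_i-Q)=0$, i.e.\ $P_jP_i=P_jQ=Q$. Taking adjoints gives $P_iP_j=Q=P_jP_i$. Thus $V_iV_i^*V_jV_j^*=V_iV_jV_j^*V_i^*$.

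Next I convert this into double commutativity. Multiplying on the left by $V_i^*$ gives $V_i^*V_jV_j^*=V_jV_j^*V_i^*$, so $V_i^*$ commutes with $P_j$. Since the ambient $M_{z_i},M_{z_j}$ doubly commute and $\S$ is invariant, we have $V_j^*V_ix=P_{\S}M_{z_j}^*M_{z_i}x=P_{\S}M_{z_i}M_{z_j}^*x$ for $x\in\S$. I expect this to be the main obstacle, because $M_{z_j}^*x$ need not lie in $\S$. I would instead argue intrinsically. For $x\in\S$, decompose $x=V_jy+w$ with $w\in\ker V_j^*$. Then $V_j^*V_iV_jy=V_iy=V_iV_j^*(V_jy)$, so it remains to show $V_j^*V_iw=0$ for $w\in\ker V_j^*=\operatorname{Ran}(I-P_j)$. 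Since $P_j$ commutes with $P_i$ and with $V_i^*$, taking adjoints shows it commutes with $V_i$ as well. Hence $V_iw=V_i(I-P_j)w=(I-P_j)V_iw\in\ker V_j^*$, which gives $V_j^*V_iw=0$. Therefore $V_j^*V_i=V_iV_j^*$ for all $i\neq j$, and $\S$ is doubly commuting. Note that this argument uses only the $|F|=2$ Brehmer conditions and never needs the ambient Hardy space structure beyond the fact that the $V_k$ are commuting isometries.
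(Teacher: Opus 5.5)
Your proposal is correct and follows the paper's route: both reduce to the two-element Brehmer condition $I-V_iV_i^*-V_jV_j^*+V_iV_jV_j^*V_i^*\ge 0$ for the commuting isometries $V_k=M_{z_k}|_{\S}$ and deduce $V_j^*V_i=V_iV_j^*$ from it. The only difference is that the paper cites \cite[Theorem 3.4]{Yang} for this two-isometry step, whereas you prove it directly: compressing to $\operatorname{Ran}(P_i-Q)$ gives $P_iP_j=Q$, which yields $P_jV_i=V_iP_j$ and then double commutation. Your opening paraphrase of Yang's observation is never used and can be dropped.
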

\begin{proof}
It is obvious that if $\mathcal S$ is doubly commuting, that is, if the sequence of isometries $\textbf{M}_z|_{\mathcal S}=(M_{z_1}|_{\mathcal S},M_{z_2}|_{\mathcal S},...)$ is doubly commuting, then $\textbf{M}_z|_{\mathcal S}$ satisfies Brehmer's positivity.  Conversely, let $i,j\in \mathbb{N}$ and $i\neq j$ and set $R_k:=M_{z_k}|_{\mathcal S}$ for all $k\in\mathbb N$. By the hypothesis, the operator $I-R_iR_i^*-R_jR_j^*+R_iR_jR_j^*R_i^*$ is positive. Therefore, by \cite[Theorem 3.4]{Yang}, $R_i$ commutes with $R_j^*$ and hence $\textbf{M}_{z}|_{\mathcal S}$ is a doubly commuting sequence of isometries. This completes the proof. \end{proof}




\vspace{0.1in} \noindent\textbf{Conflict of interest:}
The authors state that there is no conflict of interest. No data sets were generated or analyzed during the current study.

\vspace{0.1in} \noindent\textbf{Acknowledgement:}
The first author is supported by the Mathematical Research Impact Centric Support (MATRICS) grant, File No: MTR/2021/000560, by the Science and Engineering Research Board (SERB), Department of Science \& Technology (DST), Government of India. The second named author is grateful to Dr. Chaman Kumar Sahu for some useful discussions and acknowledges the support of an Institute postdoctoral fellowship from the Indian Institute of Technology Bombay and a National Postdoctoral
Fellowship (N-PDF) from the ANRF, India
(File No. PDF/2025/006823).

\end{document}